\numberwithin{equation}{section}
\newtheorem{Theorem}{Theorem}[section]
\newtheorem{Lemma}[Theorem]{Lemma}
\newtheorem{Corollary}[Theorem]{Corollary}
\newtheorem{Proposition}[Theorem]{Proposition}
\newtheorem{Example}[Theorem]{Example}
\newtheorem{Setup}[Theorem]{Setup}
\def\qed{\ifhmode\textqed\fi
	\ifmmode\ifinner\hfill\quad\qedsymbol\else\dispqed\fi\fi}
\def\textqed{\unskip\nobreak\penalty50
	\hskip2em\hbox{}\nobreak\hfill\qedsymbol
	\parfillskip=0pt \finalhyphendemerits=0}
\def\dispqed{\rlap{\qquad\qedsymbol}}
\def\ZZ{\mathbb{Z}}
\def\supp{\textup{supp}}
\def\depth{\textup{depth}}
\def\height{\textup{height}}
\def\Ass{\textup{Ass}}
\def\m{\mathfrak{m}}
\begin{document}
\title{Principal vector-spread Borel ideals}
\author{Marilena Crupi, Antonino Ficarra, Ernesto Lax}

\address{Marilena Crupi, Department of mathematics and computer sciences, physics and earth sciences, University of Messina, Viale Ferdinando Stagno d'Alcontres 31, 98166 Messina, Italy}
\email{mcrupi@unime.it}

\address{Antonino Ficarra, BCAM -- Basque Center for Applied Mathematics, Mazarredo 14, 48009 Bilbao, Basque Country -- Spain, Ikerbasque, Basque Foundation for Science, Plaza Euskadi 5, 48009 Bilbao, Basque Country -- Spain}
\email{aficarra@bcamath.org,\,\,\,\,\,\,\,\,\,\,\,\,\,antficarra@unime.it}

\address{Ernesto Lax, Department of mathematics and computer sciences, physics and earth sciences, University of Messina, Viale Ferdinando Stagno d'Alcontres 31, 98166 Messina, Italy}
\email{erlax@unime.it}

\subjclass[2020]{Primary 13B25, 13F20, 13H10, 05E40}
\keywords{Vector-spread Borel ideals, primary decomposition, sequentially Cohen-Macaulay ideals, symbolic powers.}

\begin{abstract}
	We study the class of squarefree principal vector-spread Borel ideals. We compute the minimal primary decomposition of these ideals and thereby we prove that they are sequentially Cohen-Macaulay. As the final conclusion of our results, we completely classify the ideals in our class having the property that their ordinary and symbolic powers coincide.
\end{abstract}

\maketitle

\section*{Introduction}\label{sec:intro}
Let \( S = K[x_1, \dots, x_n] \) be the standard graded polynomial ring over a field \( K \). Throughout the paper, monomials \( u \in S \) of degree \( \ell \) will be written as \( u = x_{j_1} x_{j_2} \cdots x_{j_\ell} \) with increasing indices \(1\le\, j_1 \le \cdots \le j_\ell\le n\). 

Following \cite{EHQ, F1}, we consider the notion of \emph{${\bf t}$-spread monomials} for a fixed vector \( {\bf t} = (t_1, \dots, t_{d-1}) \in \ZZ_{\ge 0}^{d-1} \). A monomial \( u = x_{j_1} x_{j_2} \cdots x_{j_\ell} \in S \), with \( \ell \le d \), is said to be \emph{${\bf t}$-spread} if it satisfies the inequality \( j_{k+1} - j_k \ge t_k \) for all \( k = 1, \dots, \ell - 1 \). 

For a monomial ideal \( I \subset S \), let \( G(I) \) denote its unique minimal set of monomial generators. We call \( I \) a \emph{${\bf t}$-spread monomial ideal} if every generator \( u \in G(I) \) is ${\bf t}$-spread. Furthermore, we define a \emph{${\bf t}$-spread strongly stable ideal} to be a ${\bf t}$-spread monomial ideal \( I \subset S \) such that for every ${\bf t}$-spread monomial \( u \in I \), and any indices \( j < i \) such that $x_i$ divides $u$ 
and \( x_j (u / x_i) \) is again ${\bf t}$-spread, it follows that \( x_j (u / x_i) \in I \). This generalizes the classical notions: for \( {\bf t} = \mathbf{0} = (0, \dots, 0) \), one recovers the standard notion of a strongly stable ideal, while the case \( {\bf t} = \mathbf{1} = (1, \dots, 1) \) corresponds to squarefree strongly stable ideals (see, for instance,  \cite{JT}).

Given ${\bf t}$-spread monomials \( u_1, \dots, u_m \in S \), we denote by \( B_{{\bf t}}(u_1, \dots, u_m) \) the smallest ${\bf t}$-spread strongly stable ideal of \( S \) containing all \( u_i \), with respect to the inclusion order. 
The monomials \( u_1, \dots, u_m \) are referred to as the \emph{${\bf t}$-spread Borel generators} of \( B_{{\bf t}}(u_1, \dots, u_m) \). In the case where \( m = 1 \), i.e., \( I = B_{{\bf t}}(u) \), the ideal \( I \) is called a \emph{principal ${\bf t}$-spread Borel ideal}, with ${\bf t}$-spread Borel generator \( u \).

The theory of strongly stable ideals (corresponding to the case \( {\bf t} = \mathbf{0} \)) is by now classical and well-developed; see \cite{JT} for a comprehensive treatment. In characteristic zero, it is known that the generic initial ideal of any homogeneous ideal of \( S \) is strongly stable \cite[Proposition 4.6.2]{JT}. Moreover, the structure and minimal graded free resolutions of such ideals have been described explicitly by Eliahou and Kervaire in their seminal work \cite{EK}, a cornerstone for numerous developments in Combinatorial Commutative Algebra.

Likewise, the theory of vector-spread strongly stable ideals has undergone significant advances in recent years (see, for instance, \cite{AFC, CEL, CF2023, DHQ, NQKR} and the references therein). Furthermore, vector-spread ideals naturally arise in various contexts. They occur as initial ideals of the defining ideals of fiber cones of monomial ideals in two variables \cite{HQS}. They serve as fundamental tools in the study of restricted classes of toric algebras of Veronese type \cite{DHQ} and, moreover, they emerge in the classification of edge ideals whose matching powers are all bi-Cohen-Macaulay \cite{CF2025}.

When the number \( m \) of the \( \mathbf{0} \)-spread Borel generators of a strongly stable ideal \( I \subset S \) becomes too large, certain algebraic properties of \( I \) tend to deteriorate. For instance, in the case \( m = 1 \), {\em i.e.}, when \( I \) is a principal Borel ideal, it is known that the Rees algebra of \( I \) is Koszul \cite{De}. Addressing a question posed by Conca, it was shown in \cite{DFMSS} that the Koszul property of the Rees algebra persists when \( m = 2 \). However, explicit examples demonstrate that this property generally fails for \( m \ge 3 \). These observations indicate that the algebraic behavior of a ${\bf t}$-spread principal Borel ideal is often better behaved than that of an arbitrary ${\bf t}$-spread strongly stable ideal.

The present work aims to investigate the structure of principal vector-spread Borel ideals through the lens of Alexander duality. To this end, we restrict our attention to the squarefree case, and henceforth assume that \( t_i \ge 1 \) for all \( i = 1, \dots, d-1 \).

In Section~\ref{sec1}, we describe explicitly the minimal primary decomposition of a squarefree principal vector-spread Borel ideal (Theorem~\ref{thm:PrimDecom}). The notion of \emph{\(\mathbf{t}\)-spread support} of a \(\mathbf{t}\)-spread monomial plays a key role in the formulation of this result.

In Section~\ref{sec2}, we show that squarefree principal vector-spread Borel ideals are sequentially Cohen-Macaulay (Theorem~\ref{thm:Borel_SCM}). To this end, we prove that their Alexander duals are vertex splittable in the sense of Moradi and Khosh-Ahang \cite{MKA16}. Whether this result holds in the non-squarefree setting is an open question.

Finally, in Section~\ref{sec3}, we classify the squarefree principal vector-spread Borel ideals whose ordinary and symbolic powers coincide (Theorem~\ref{Thm:Bt(u)SP}). Our approach mainly follows the one outlined in \cite{NQKR}. We point out, however, that the proof of the implication (b) $\Rightarrow$ (c) of Theorem~\ref{Thm:Bt(u)SP} differs from that given in \cite[Theorem 5.13]{NQKR}. One can see that the proof in \cite{NQKR} remains valid only when $t_1\ge\dots\ge t_{d-1}$.

The computational packages \cite{FPack, L_pack} have been employed to verify several examples supporting our theoretical results.\vspace*{-0.2em}

\section{Primary decomposition of $B_{\bf t}(u)$}\label{sec1}
In this paper, we focus our attention on squarefree vector-spread principal Borel ideals. The following notation is fixed throughout the paper.
\begin{Setup} \label{setup}
	Let ${\bf t}=(t_1,\dots,t_{d-1})\in\ZZ_{\ge1}^{d-1}$, let $u=x_{j_1}\cdots x_{j_d}\in S$ be a ${\bf t}$-spread monomial with $1\le j_1<\dots<j_{d-1}<j_d=n$ and $d\ge2$ and let $B_{\bf t}(u)$ be the squarefree vector-spread principal Borel ideals with ${\bf t}$-spread Borel generator $u$.
\end{Setup}

The conditions $j_d=n$ and $d\ge2$ are not restrictive. Indeed if $j_d<n$, then $B_{\bf t}(u)$ can be regarded as a monomial ideal of the polynomial ring $K[x_1,\dots,x_{j_d}]$ with fewer variables than $S$. Moreover, if $d=1$ then $B_{\bf t}(u)$ is generated by variables.\smallskip

For an integer $n\ge1$, we set $[n]=\{1,\dots,n\}$. Given a non-empty subset $A\subseteq[n]$, let $P_A=(x_i:\ i\in A)$. For integers $j,k\ge1$, we set $[j,k]=\{h\in\mathbb{N}:\ j\le h\le k\}$. The {\em ${\bf t}$-spread support} of a ${\bf t}$-spread monomial $v=x_{i_1}x_{i_2}\cdots x_{i_{\ell}}\in S$ is defined as
\[
\supp_{\bf t}(v)\ =\ \bigcup_{s=1}^{\ell-1}\,[i_s,i_{s}+(t_s-1)].
\]

Moreover, we set $\supp(v)=\{i_1,\dots,i_\ell\}=\{j:\,x_j \mid v\}$.

\begin{Theorem}\label{thm:PrimDecom}
	The minimal primary decomposition of $B_{\bf t}(u)$ is
	\[
	B_{\bf t}(u)\ =\ (\bigcap_{G\in\mathcal{G}}P_{[n]\setminus G})\cap(\bigcap_{v\in G(B_{\bf t}(u))}P_{[n]\setminus\supp_{\bf t}(v)}),
	\]
	where $\mathcal{G}$ is the family of all subsets of $[n]$ of the form
	\[
	\supp_{\bf t}(x_{\ell_1}x_{\ell_2}\cdots x_{\ell_{s-1}}x_{j_s})\cup[j_s+1,n],
	\]
	such that $x_{\ell_1}x_{\ell_2}\cdots x_{\ell_{s-1}}x_{j_s}\in G(B_{\bf t}(x_{j_1}x_{j_2}\cdots x_{j_{s-1}}x_{j_s}))$, for some $s=1,\dots,d$.
\end{Theorem}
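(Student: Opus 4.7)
The plan is to use the standard correspondence for squarefree monomial ideals: a prime $P_A$ contains $B_{\mathbf{t}}(u)$ if and only if $A$ meets the support of every generator. Equivalently, the minimal primes correspond to the maximal ``bad'' sets $F \subseteq [n]$ that contain no generator's support. I would identify the listed primes with such maximal bad sets. A useful preliminary observation is that the first-type primes are already captured by the $s = d$ case of $\mathcal{G}$: for any generator $v = x_{i_1}\cdots x_{i_d}$, the monomial $v' := x_{i_1}\cdots x_{i_{d-1}}x_n$ is itself a generator (since $i_{d-1} \le j_{d-1} \le n - t_{d-1}$), and $\supp_{\mathbf{t}}(v) = \supp_{\mathbf{t}}(v')$.

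The technical heart is the following key lemma, proved by induction on $r \ge 1$: \emph{for any $\mathbf{t}$-spread monomial $v = x_{i_1}\cdots x_{i_r}$ of degree $r$, the set $\supp_{\mathbf{t}}(v) = \bigcup_{k=1}^{r-1}[i_k, i_k+t_k-1]$ contains no $\mathbf{t}$-spread $r$-tuple.} The cases $r = 1, 2$ are immediate (empty set, respectively $a_2 - a_1 \le t_1 - 1 < t_1$). For $r \ge 3$ and a candidate $\mathbf{t}$-spread $r$-tuple $a_1 < \cdots < a_r$, one first shows $a_{r-1} < i_{r-1}$: otherwise $a_{r-1}$ must lie in the last interval $[i_{r-1}, i_{r-1}+t_{r-1}-1]$, and $a_r$ is then forced either into the same interval (so $a_r - a_{r-1} \le t_{r-1}-1$) or into an earlier one (so $a_r \le i_{r-2}+t_{r-2}-1 < i_{r-1} \le a_{r-1}$), both contradictions. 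Hence $(a_1, \ldots, a_{r-1})$ is a $(t_1, \ldots, t_{r-2})$-spread $(r-1)$-tuple in $\supp_{\mathbf{t}}(x_{i_1}\cdots x_{i_{r-1}})$, contradicting the inductive hypothesis.

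For the inclusion $B_{\mathbf{t}}(u) \subseteq \textup{RHS}$, the Key Lemma is applied twice: directly to the first-type primes, and, for the second-type primes $P_{[n]\setminus G}$ with $G = \supp_{\mathbf{t}}(v') \cup [j_s+1, n]$ where $v' = x_{\ell_1}\cdots x_{\ell_{s-1}}x_{j_s}$, by observing that $i_k \le j_k \le j_s$ for $k \le s$ forces the first $s$ coordinates of any generator's support into $\supp_{\mathbf{t}}(v')$, contradicting the Key Lemma in degree $s$. For the reverse inclusion, I introduce a greedy algorithm: for a squarefree monomial $w$, set $b_k := \min\{a \in \supp(w) : a \ge b_{k-1}+t_{k-1},\ a \le j_k\}$ (with $b_0 := 0$, $t_0 := 1$); then $w \in B_{\mathbf{t}}(u)$ if and only if $b_d$ is defined. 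When the greedy fails at step $s$, setting $\ell_k := b_k$ for $k < s$ produces $G \in \mathcal{G}$ with $\supp(w) \subseteq G$; the verification hinges on noting that any $a \in \supp(w)$ in a gap $[\ell_m + t_m, \ell_{m+1}-1]$ would satisfy $a < \ell_{m+1} \le j_{m+1}$ and hence be a smaller valid candidate for $\ell_{m+1}$, contradicting greedy minimality.

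For minimality of the decomposition, it suffices to show the sets $G \in \mathcal{G}$ are pairwise incomparable. Given distinct $G, G' \in \mathcal{G}$ with respective parameters $(s, \ell)$ and $(s', \ell')$: if $s < s'$, then $j_{s'} \in G$ (since $j_{s'} \in [j_s+1, n] \subseteq G$) while $j_{s'} \notin G'$ (because $\ell'_m + t_m - 1 \le j_{m+1}-1 \le j_{s'}-1$ for every $m < s'$, so $j_{s'}$ lies in none of the intervals of $G'$, and $j_{s'} \notin [j_{s'}+1, n]$), whence $G \not\subseteq G'$; the case $s > s'$ is symmetric. When $s = s'$ but $\ell \ne \ell'$, the smallest index $m$ at which they differ yields $\min(\ell_m, \ell'_m)$ in one of $G, G'$ but not the other, by the analogous check. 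I expect the principal technical obstacle to be the Key Lemma's inductive argument together with the greedy verification; the incomparability argument and the minimality of the decomposition then follow routinely.
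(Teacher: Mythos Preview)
Your proof is essentially correct and follows the same Stanley--Reisner/facet strategy as the paper: both arguments identify the facets of the associated simplicial complex and use a greedy procedure to show that every face lies in one of the listed sets. Your greedy (with the built-in constraint $a\le j_k$) and the paper's greedy (defining $\ell_i=\min\{r\in G:r\ge \ell_{i-1}+t_{i-1}\}$ and then testing $\ell_i\le j_i$) are equivalent step by step.

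There are two genuine differences worth noting. First, your Key Lemma isolates cleanly, and proves by induction, the statement that $\supp_{\bf t}(v)$ for a degree-$r$ ${\bf t}$-spread monomial contains no ${\bf t}$-spread $r$-tuple; the paper uses this fact but only asserts it ``by construction''. Second, your observation that the first family of primes is exactly the $s=d$ case of $\mathcal{G}$ (since $j_d=n$ makes $[j_d+1,n]=\emptyset$ and $\supp_{\bf t}$ ignores the last index) is a tidy unification absent from the paper. Conversely, the paper verifies directly that each listed set is a \emph{facet} (exhibiting, for each $i\notin F$, a generator dividing ${\bf x}_{F\cup\{i\}}$), so it never needs a separate incomparability step; you instead show each $G$ is a face and then argue incomparability.

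There is one small gap in your incomparability argument. For $s<s'$ you correctly produce $j_{s'}\in G\setminus G'$, giving $G\not\subseteq G'$; but ``the case $s>s'$ is symmetric'' only yields $G'\not\subseteq G$ for that same pair after relabelling, so for a fixed pair with $s<s'$ you have not ruled out $G'\subseteq G$. This is easy to fix with your own Key Lemma: if $G'\subseteq G$, then $\ell'_1,\dots,\ell'_s$ (which exist since $s'-1\ge s$) satisfy $\ell'_k\le j_k\le j_s$, hence lie in $G\cap[1,j_s]=\supp_{\bf t}(x_{\ell_1}\cdots x_{\ell_{s-1}}x_{j_s})$, a $(t_1,\dots,t_{s-1})$-spread $s$-tuple inside the ${\bf t}$-support of a degree-$s$ monomial, contradicting the Key Lemma. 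For $s=s'$, the quickest patch is to note that the intervals $[\ell_k,\ell_k+t_k-1]$ are pairwise disjoint, so $|G|=|G'|=(n-j_s)+\sum_{k=1}^{s-1}t_k$; thus $G\subseteq G'$ would force $G=G'$.
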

\begin{proof}
	Set $I=B_{\bf t}(u)$. Since $I$ is squarefree, there exists a unique simplicial complex $\Delta$ on the vertex set $[n]$ such that $I=I_\Delta$ is the Stanley-Reisner ideal of $\Delta$. Therefore, by \cite[Lemma 1.5.4]{JT}, the minimal primary decomposition of $I$ is
	\[
	I=\bigcap_{F\in\mathcal{F}(\Delta)}P_{[n]\setminus F},
	\]
	where $\mathcal{F}(\Delta)$ is the set of facets of $\Delta$. Hence, we must prove that
	\begin{equation}\label{facetprimdecomplexinit}
	\mathcal{F}(\Delta)=\mathcal{G}\cup\{\supp_{\bf t}(v):v\in G(B_{\bf t}(u))\}.
	\end{equation}

	Observe that $F\in\mathcal{F}(\Delta)$ if and only if ${\bf x}_F \notin I$ and ${\bf x}_{{F}\cup\{i\}}\in I$, for all $i\in[n]\setminus F$. \smallskip

	We show that each set $F=\supp_{\bf t}(v)$, with $v\in B_{\bf t}(u)$ is a facet of $\Delta$. Let $v=x_{\ell_1}x_{\ell_2}\cdots x_{\ell_d}\in G(B_{\bf t}(u))$. We have
	\[
	F\ =\ \{\ell_1,\ell_1+1,\dots,\ell_1+(t_1-1),\ \dots,\ \ell_{d-1},\ell_{d-1}+1,\dots,\ell_{d-1}+(t_{d-1}-1)\},
	\]
	with $\ell_1\le j_1,\ \ell_2\le j_2,\ \ldots,\ \ell_{d-1}\le j_{d-1}$. Clearly, ${\bf x}_F\notin I$, because, by construction, ${\bf x}_F$ is not a multiple of any ${\bf t}$-spread monomial of degree $d$ of $S$. Hence, $F\in \Delta$.
	
	To prove that $F$ is a facet, it suffices to show that ${\bf x}_{F\cup\{i\}}\in I$, for all $i\in[n]\setminus F$, that is, $F\cup\{i\} \notin \Delta$, for all $i\in[n]\setminus F$. Let $i\in[n]\setminus F$. We have that
	\begin{enumerate}
		\item[-] if $i<\ell_1$, $H=\{i,\ell_1+(t_1-1),\ldots,\ell_{d-1}+(t_{d-1}-1)\}\not\subseteq F$, and ${\bf x}_H\in I=B_{\bf t}(u)$, that is, $H\notin \Delta$. Indeed, $i<\ell_1\le j_1$, $\ell_1+(t_1-1)<\ell_1+t_1\le j_1+t_1\le j_2$, $\dots$, $\ell_{d-1}+(t_{d-1}-1)<\ell_{d-1}+t_{d-1}\le j_{d-1}+t_{d-1}\le j_d$;\\
		\item[-] if $\ell_k+(t_k-1)<i<\ell_{k+1}$, for some $1\le k\le d-2$, then
		\[
		H=\{\ell_1,\ldots,\ell_k,i,\ell_{k+1}+(t_{k+1}-1),\dots,\ell_{d-1}+(t_{d-1}-1)\}\not\subseteq F,
		\]
		and ${\bf x}_H\in I=B_{\bf t}(u)$, arguing as before;\\
		\item[-] if $\ell_{d-1}+(t_{d-1}-1)<i$, then $H=\{\ell_1,\ldots,\ell_{d-1},i\}\not\subseteq F$, and ${\bf x}_H\in I=B_{\bf t}(u)$, in fact $i\le j_d=n$.
	\end{enumerate}
	
	In each case, ${\bf x}_{F\cup\{i\}}$ is a multiple of a ${\bf t}$-spread monomial of degree $d$, belonging to $B_{\bf t}(u)$. Therefore ${\bf x}_{F\cup\{i\}}\in I$ and $F$ is a facet.
	
	Analogously, one proves that any $F\in\mathcal{G}$ is a facet of $\Delta$. Indeed, let $F\in\mathcal{G}$. Then, there exists $s\in\{1,\ldots,d-1\}$ such that
	\[
	F=\supp_{\bf t}(x_{\ell_1}x_{\ell_2}\cdots x_{\ell_{s-1}}x_{j_s})\cup[j_s+1,n],
	\]
	with $x_{\ell_1}x_{\ell_2}\cdots x_{\ell_{s-1}}x_{j_s}\in G(B_{\bf t}(x_{j_1}x_{j_2}\cdots x_{j_{s-1}}x_{j_s}))$. If $s=1$, then $F=[j_1+1,n]$, hence $F\in\mathcal{F}(\Delta)$. Suppose $2\leq s\leq d$, then ${\bf x}_F \notin I$. To prove that $F$ is a facet, we must show that ${\bf x}_{F\cup\{i\}} \in I$, for all $i\in [n]\setminus F$. Let $i\in [n]\setminus F$, we have 
	\begin{enumerate}
		\item[-] if $i<\ell_1$, then
		\[
		H=\{i,\ell_1+(t_1-1),\ldots,\ell_{s-1}+(t_{s-1}-1),j_{s}+(t_{s}-1),\ldots,j_{s}+(t_{s}+\ldots+t_{d-1}-1)\}\not\subseteq F
		\]
		and ${\bf x}_H\in I=B_{\bf t}(u)$, since $i<\ell_1\le j_1$, $\ell_k+(t_k-1)<\ell_k+t_k\le j_k+t_k\le j_{k+1}$ for $2\leq k\leq s-1$, and $j_s+(t_s+\cdots+t_{s+k}-1)<j_s+t_s+ t_{s+1}\cdots+t_{s+k}\leq j_{s+1} +t_{s+1}+\cdots+t_{s+k}\leq\dots\leq j_{s+k}+t_{s+k}\leq j_{s+k+1}$ for $0\leq k\leq d-s-1$;\\
		\item[-] if $\ell_k+(t_k-1)<i<\ell_{k+1}$, for some $1\le k\le s-1$, then
		\[
		H=\{\ell_1,\ldots,\ell_k,i,\ell_{k+1}+(t_{k+1}-1),\dots,\ell_{d-1}+(t_{d-1}-1)\}\not\subseteq F,
		\]
		and ${\bf x}_H\in I=B_{\bf t}(u)$, arguing as before.
	\end{enumerate}
 
	Conversely, we show that the only facets of $\Delta$ are the ones determined before. This is equivalent to prove that for every face $G$ of $\Delta$, there exists $F\in\mathcal{F}(\Delta)$ of the forms described in (\ref{facetprimdecomplexinit}) which contains $G$. 
	
	For this aim, let $G\in\Delta$. We define the following integers
	\begin{align*}
	\ell_1 & = \min(G),\\
	\ell_i & = \min\{r\in G:r\ge\ell_{i-1}+t_{i-1}\}, \ \ \text{for}\ 2\le i\le d.
	\end{align*}
	Suppose $\ell_i\le j_i$, for all $i$. In such case, the sequence $\ell_1<\ell_2<\dots<\ell_k$ can have at most $d-1$ elements. Indeed, if $k\ge d$, then $G\supseteq \{\ell_1, \ell_2, \ldots, \ell_d\}$ with $\ell_i\ge \ell_{i-1}+t_{i-1}$ for $2\le i \le d$. This implies that $\{\ell_1, \ell_2, \ldots, \ell_d\}$ should be a face of $\Delta$, an absurd. In fact, since $\ell_i\le j_i$, for all $i$, then the ${\bf t}$-spread monomial $x_{\ell_1}\cdots x_{\ell_d}$ must belong to $I$. Therefore $k\le d-1$, and $G$ has the following form
	\[G=\{\ell_1,\ell_1+1,\dots,\ell_1+r_1, \dots,\ \ell_{k},\ell_{k}+1,\dots,\ell_{k}+r_k\}\]
	with $0 \le r_i\le t_i-1$, for $i=1, \ldots, k$ and $\ell_i \ge\ell_{i-1}+t_{i-1}$, for $i=2, \ldots, k$. Moreover,
	\begin{align*}
	G\ \subseteq\ F=\{&\ell_1,\ell_1+1,\dots,\ell_1+(t_1-1),\ \dots,\ \ell_{k},\ell_{k}+1,\dots,\ell_{k}+(t_{k}-1),\\
	&j_{k+1},j_{k+1}+1,\dots,j_{k+1}+(t_{k+1}-1),\ \dots,\ j_{d-1},\dots,j_{d-1}+(t_{d-1}-1) \},
	\end{align*}
	and $F$ is a facet as observed before.\smallskip
	
	Suppose now $\ell_i>j_i$ for some $i$. Let $s=\min\{i:\ \ell_i>j_i\}$, then $G$ has the following form
	\[G=\{\ell_1,\ell_1+1,\dots,\ell_1+r_1, \dots,\ \ell_{s-1},\ell_{s-1}+1,\dots,\ell_{s-1}+r_{s-1}\}\]
	with $0 \le r_i\le t_i-1$, for $i=1, \ldots, s-1$ and $\ell_i \ge\ell_{i-1}+t_{i-1}$, for $i=2, \ldots, s-1$. On the other hand,
	\begin{align*}
	G\ \subseteq\ F=\{&\ell_1,\ell_1+1,\dots,\ell_1+(t_1-1),\ \dots,\ \ell_{s-1},\ell_{s-1}+1,\dots,\ell_{s-1}+(t_{s-1}-1),\\ &j_s+1,j_s+2,\dots,n\}=\supp_{\bf t}(x_{\ell_1}x_{\ell_2}\cdots x_{\ell_{s-1}}x_{j_s})\cup[j_s+1,n]\in\mathcal{G}.
	\end{align*}
	This proves the result.
\end{proof}

Theorem \ref{thm:PrimDecom} allows to recover \cite[Corollary 2.4]{CF2023} for the family of squarefree vector-spread principal Borel ideals. For a monomial $u\in S$, set $\min(u)=\min\{j: x_j\mid u\}$.

\begin{Corollary}
	We have $\height\,B_{\bf t}(u) = \min(u)$.
\end{Corollary}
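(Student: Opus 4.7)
The plan is to use Theorem~\ref{thm:PrimDecom} to enumerate all minimal primes of $B_{\bf t}(u)$, compute their heights, and show that the smallest one equals $j_1=\min(u)$. Since $B_{\bf t}(u)=I_\Delta$ is squarefree, every monomial minimal prime $P_{[n]\setminus F}$ has height $n-|F|$, so the task reduces to finding the facet of $\Delta$ of largest cardinality.

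The first step is to compute $|\supp_{\bf t}(w)|$ for any ${\bf t}$-spread monomial $w=x_{i_1}\cdots x_{i_\ell}$: the intervals $[i_r,i_r+t_r-1]$ in the definition are pairwise disjoint, since $i_{r+1}\ge i_r+t_r$, hence $|\supp_{\bf t}(w)|=\sum_{r=1}^{\ell-1}t_r$. Applied to the two families of facets in Theorem~\ref{thm:PrimDecom}, this yields $|\supp_{\bf t}(v)|=\sum_{r=1}^{d-1}t_r$ for facets of the first type and, after noting that the union is disjoint thanks to $j_s\ge \ell_{s-1}+t_{s-1}$, the equality $|\supp_{\bf t}(x_{\ell_1}\cdots x_{\ell_{s-1}}x_{j_s})\cup[j_s+1,n]|=\sum_{r=1}^{s-1}t_r+(n-j_s)$ for facets of the second type.

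Next, I would set $h_s:=j_s-\sum_{r=1}^{s-1}t_r$ for $s=1,\dots,d$: these are precisely the heights of the minimal primes coming from the second family, and since $j_d=n$ the value $h_d$ also matches the common height $n-\sum_{r=1}^{d-1}t_r$ of the primes arising from the first family. The ${\bf t}$-spread condition on $u$ forces $h_{s+1}-h_s=(j_{s+1}-j_s)-t_s\ge 0$, so the sequence is non-decreasing and the minimum height equals $h_1=j_1=\min(u)$. I do not foresee any serious obstacle here: with Theorem~\ref{thm:PrimDecom} in hand, the argument reduces to two small bookkeeping facts — disjointness of the support intervals and monotonicity of the sequence $h_s$ — both of which are direct consequences of the ${\bf t}$-spread condition.
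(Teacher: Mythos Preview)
Your argument is correct and follows essentially the same approach as the paper: both invoke Theorem~\ref{thm:PrimDecom}, compute the cardinalities of the two families of facets, and conclude that the minimal prime of smallest height is $P_{[j_1]}$ arising from $s=1$. Your presentation is in fact a bit more transparent, since the auxiliary sequence $h_s=j_s-\sum_{r=1}^{s-1}t_r$ and its monotonicity $h_{s+1}-h_s=(j_{s+1}-j_s)-t_s\ge 0$ make the comparison between the two families completely explicit.
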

\begin{proof}
Set $I=B_{\bf t}(u)$. By Theorem \ref{thm:PrimDecom}, if $\Delta$ is the simplicial complex whose Stanley-Reisner ideal $I_\Delta$ is $I$, that is $I=I_\Delta$, we have that $\mathcal{F}(\Delta)$ consists of sets of one of the following forms:
	\begin{enumerate}
	\item[-] $F =\{\ell_1,\ell_1+1,\ldots,\ell_1+(t_1-1),\ldots, \ell_{d-1},\ell_{d-1}+1,\ldots,\ell_{d-1}+(t_{d-1}-1)\},$\\
	with $\ell_k \leq j_k$ for $1\leq k\leq d-1$ and $\ell_k - \ell_{k-1}\geq t_{k-1}$ for $2\leq k\leq d-1$;
	\item[]
	\item[-] $F' = \{\ell_1,\ell_1+1,\dots,\ell_1+(t_1-1),\ldots,\ell_{s-1},\ell_{s-1}+1,\ldots,\ell_{s-1}+(t_{s-1}-1),$\\
	$\phantom{..........}j_s+1,j_s+2,\ldots,n\},$\\
	with $1\leq s\leq d-1$, $\ell_k \leq j_k$ for $1\leq k\leq s-1$ and $\ell_k - \ell_{k-1}\geq t_{k-1}$ for $2\leq k\leq s$.
\end{enumerate}
Note that for $s=1$, $F' = [j_1+1, n]$. 
Hence,
\[\height\,I = \min\Big\{n-\sum_{i=1}^{d-1}t_i,\, j_1,\, n-\sum_{i=2}^{s-1}t_i\Big\} =j_1=\min(u).\]

Indeed, since $s\le d-1$ and $u=x_{j_1}x_{j_2}\cdots x_{j_d}$ is a ${\bf t}$-spread monomial, then $j_1\le n - \sum_{i=1}^{d-1}t_i \le n-\sum_{i=2}^{s-1}t_i$. 
\end{proof}

\section{Sequentially Cohen-Macaulayness of $B_{\bf t}(u)$}\label{sec2}
In this section we analyze the sequentially Cohen-Macaulayness of the principal ${\bf t}$-spread Borel ideal $B_{\bf t}(u)$ via the notion of \textit{vertex splitting}.

Recall that a graded $R$-module $M$ is called \textit{sequentially Cohen-Macaulay} 
if there exists a finite filtration of graded $S$-modules
\[
M_0\subset M_1 \subset \cdots \subset M_r=M
\]
such that each $M_i/M_{i-1}$ is Cohen-Macaulay, and the Krull dimensions of the quotients are increasing: $\dim(M_1/M_0)<\dim(M_2/M_1)<\cdots<\dim(M_r/M_{r-1})$.\smallskip

We say that an ideal $I\subset S$ is sequentially Cohen-Macaulay if $S/I$ is a sequentially Cohen-Macaulay $S$-module.\medskip

The goal of this section is to prove the following theorem.
\begin{Theorem}\label{thm:Borel_SCM}
	$B_{\bf t}(u)$ is sequentially Cohen-Macaulay.
\end{Theorem}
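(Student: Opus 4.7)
The plan is to reduce the sequential Cohen-Macaulayness of $S/B_{\bf t}(u)$ to a combinatorial vertex splittability statement about the Alexander dual $B_{\bf t}(u)^{\vee}$, and then to establish that statement by induction. By the Alexander duality theorem of Herzog and Hibi, $S/J$ is sequentially Cohen-Macaulay if and only if $J^{\vee}$ has a componentwise linear resolution; by the main theorem of Moradi and Khosh-Ahang \cite{MKA16}, every vertex splittable monomial ideal admits linear quotients, and in particular a componentwise linear resolution. So it suffices to prove that $B_{\bf t}(u)^{\vee}$ is vertex splittable.

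Using Theorem~\ref{thm:PrimDecom}, the minimal generators of $B_{\bf t}(u)^{\vee}$ can be described explicitly as the squarefree monomials ${\bf x}_{[n]\setminus F}=\prod_{i\in[n]\setminus F}x_{i}$ as $F$ ranges over the facets of the Stanley-Reisner complex $\Delta$ of $B_{\bf t}(u)$. By Theorem~\ref{thm:PrimDecom}, these facets split into two combinatorial families: the ${\bf t}$-spread supports $\supp_{\bf t}(v)$ of minimal generators $v\in G(B_{\bf t}(u))$, and the sets $\supp_{\bf t}(x_{\ell_{1}}\cdots x_{\ell_{s-1}}x_{j_{s}})\cup[j_{s}+1,n]$ belonging to the family $\mathcal{G}$.

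My plan is then to argue by induction on $n+d$, taking $x_{1}$ as the splitting vertex whenever $j_{1}\ge 2$. Writing $B_{\bf t}(u)^{\vee}=x_{1}Q+R$ with $R$ generated by the monomials ${\bf x}_{[n]\setminus F}$ corresponding to facets $F\ni 1$, I expect that $Q$ coincides with the Alexander dual of $B_{\bf t}(u)$ viewed as a monomial ideal of $K[x_{2},\dots,x_{n}]$ (which is again a principal ${\bf t}$-spread Borel ideal, but in strictly fewer variables). Since every facet $F\ni 1$ contains the entire block $[1,t_{1}]$, the ideal $R$ is supported on $\{x_{t_{1}+1},\dots,x_{n}\}$, and after relabelling variables it should match the Alexander dual of the shorter principal Borel ideal $B_{{\bf t}'}(x_{j_{2}}\cdots x_{j_{d}})$ in $K[x_{t_{1}+1},\dots,x_{n}]$, where ${\bf t}'=(t_{2},\dots,t_{d-1})$. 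Both smaller ideals then fall under the inductive hypothesis. The degenerate case $j_{1}=1$, where every facet of $\Delta$ contains the vertex $1$ and the splitting on $x_{1}$ becomes trivial, is treated separately: in that situation $B_{\bf t}(u)=x_{1}\cdot B_{{\bf t}'}(u/x_{1})$ and the Alexander dual is controlled directly by that of the smaller factor.

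The main technical obstacle I anticipate is the verification of the containment $R\subseteq Q$ built into the definition of vertex splittability: concretely, for every facet $F$ with $1\in F$ one must produce a facet $F'$ with $1\notin F'$ and $F\setminus\{1\}\subseteq F'$. This has to be checked separately for facets of the two combinatorial types listed in Theorem~\ref{thm:PrimDecom}, and the construction of $F'$ must respect the Borel inequalities $\ell_{k}\le j_{k}$ at every step. Once this containment is in place, identifying $Q$ and $R$ with Alexander duals of genuinely smaller principal ${\bf t}$-spread Borel ideals is essentially combinatorial bookkeeping, and the induction closes.
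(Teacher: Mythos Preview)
Your proposal is correct and follows essentially the same route as the paper: reduce sequential Cohen--Macaulayness to componentwise linearity of the Alexander dual via \cite[Theorem 8.2.20]{JT}, and then prove that $B_{\bf t}(u)^{\vee}$ is vertex splittable by taking $x_{1}$ as splitting vertex and identifying the two pieces with Alexander duals of smaller principal vector-spread Borel ideals (the paper writes these as $B'_{\bf t}(u)^{\vee}$ and $B'_{{\bf t}'}(u/x_{j_1})^{\vee}$), closing by induction on $d$ and $n$. The one place the paper is shorter is precisely your ``main technical obstacle'': the containment $R\subseteq Q$ (equivalently $I_{\Delta_2}^{\vee}\subseteq I_{\Delta_1}^{\vee}$) is not checked by hand but obtained for free from \cite[Lemma 2.2]{MKA16}, which gives the decomposition $I_{\Delta}^{\vee}=x_1 I_{\Delta_1}^{\vee}+I_{\Delta_2}^{\vee}$ together with that containment as a general structural fact about links and deletions, so you can save yourself that verification.
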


The proof of this result requires some preparation.\smallskip

For a graded ideal $I\subset S$, we denote by $I_{\langle i\rangle}$ the ideal generated by the elements of $I$ having degree $i$. We say that $I$ is componentwise linear if $I_{\langle i\rangle}$ has linear resolution for all $i$. By \cite[Theorem 8.2.15]{JT}, monomial ideals with linear quotients are componentwise linear.

Let $I\subset S$ be a squarefree ideal and let $\Delta$ be the unique simplicial complex on the vertex set $[n]$ such that $I_\Delta=I$. The Alexander dual of $I$ is the ideal
\[
I^\vee\ =\ \bigcap_{u\in G(I)}P_{\supp(u)}.
\]

We need the following result \cite[Theorem 8.2.20]{JT}.

\begin{Theorem}\label{thm:dualscmideal}
	Let $I\subset S$ be a squarefree monomial ideal. Then $I$ is sequentially Cohen-Macaulay if and only if $I^\vee$ is componentwise linear.
\end{Theorem}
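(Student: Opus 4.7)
The plan is to prove both implications simultaneously by translating each property through Alexander duality into a collection of statements about pure subcomplexes of $\Delta$, and then reducing everything to the classical Eagon--Reiner theorem applied one degree at a time.

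First I would collect two background ingredients from the same chapter of \cite{JT}. The \emph{Eagon--Reiner theorem} asserts that a squarefree monomial ideal $J\subset S$ has a linear resolution if and only if $S/J^\vee$ is Cohen--Macaulay; this is the ``pure'' version of what we want. The \emph{pure skeleton criterion} for sequential Cohen--Macaulayness states that, writing $I=I_\Delta$, the quotient $S/I_\Delta$ is sequentially Cohen--Macaulay if and only if for every integer $i$ with $0\le i\le\dim\Delta$ the pure $i$-skeleton $\Delta^{[i]}$---the subcomplex of $\Delta$ whose facets are exactly the facets of $\Delta$ of dimension $i$---is Cohen--Macaulay. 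The first lets one cover the pure case, and the second breaks the sequential setting into a family of pure problems.

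The combinatorial heart of the proof is an Alexander-dual identity linking the pure skeletons of $\Delta$ to the degree components of $I^\vee$. The minimal generators of $I_\Delta^\vee=\bigcap_{F\in\mathcal{F}(\Delta)}P_{[n]\setminus F}$ are precisely the monomials ${\bf x}_{[n]\setminus F}=\prod_{j\notin F}x_j$ as $F$ runs over the facets of $\Delta$, and such a generator has degree $n-\dim F-1$. Applying the very same formula to $\Delta^{[i]}$ (whose facets are, by definition, the $i$-dimensional facets of $\Delta$) and comparing the two descriptions gives
\[
I_{\Delta^{[i]}}^{\vee}\;=\;(I_\Delta^\vee)_{\langle n-i-1\rangle}.
\]
This is the bridge between the two sides of the theorem.

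With the identity in hand the theorem assembles into a chain of equivalences: $I$ is sequentially Cohen--Macaulay $\iff$ every $\Delta^{[i]}$ is Cohen--Macaulay $\iff$ every $I_{\Delta^{[i]}}^\vee$ has a linear resolution $\iff$ each $(I^\vee)_{\langle n-i-1\rangle}$ corresponding to a nonempty $\Delta^{[i]}$ has a linear resolution $\iff$ $I^\vee$ is componentwise linear. The step I expect to require the most care is this last equivalence: componentwise linearity demands that $(I^\vee)_{\langle j\rangle}$ have a linear resolution for \emph{every} degree $j$, whereas the identity above only directly controls those $j$ at which $I^\vee$ acquires a new minimal generator. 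The gap is closed by the standard fact that if $L$ is a monomial ideal generated in a single degree with a linear resolution, then $\mathfrak{m}^{r}L$ has a linear resolution for every $r\ge 0$; inserting this between consecutive generator degrees of $I^\vee$ recovers the intermediate components and completes the argument.
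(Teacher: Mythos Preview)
The paper does not give its own proof of this statement; it is quoted as \cite[Theorem 8.2.20]{JT}, and your outline is essentially the argument presented in that reference. There is, however, a genuine slip in your setup that would make the proof fail as written.

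Your definition of the pure $i$-skeleton is wrong: the facets of $\Delta^{[i]}$ are \emph{all} $i$-dimensional faces of $\Delta$, not only those that happen to be facets of $\Delta$. With your definition the ``pure skeleton criterion'' is simply false. For instance, let $\Delta$ have facets $\{1,2,3\}$ and $\{4,5\}$: your $\Delta^{[2]}$ is the triangle and your $\Delta^{[1]}$ is the single edge $\{4,5\}$, both Cohen--Macaulay, yet $\Delta$ is not sequentially Cohen--Macaulay because the correct $\Delta^{[1]}$, generated by all edges of $\Delta$, is the disjoint union of the boundary of the triangle with the edge $\{4,5\}$, a disconnected $1$-complex. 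The same mis-definition invalidates your duality identity: with the correct $\Delta^{[i]}$ the generators of $I_{\Delta^{[i]}}^\vee$ are the monomials ${\bf x}_{[n]\setminus G}$ as $G$ ranges over all $i$-faces of $\Delta$, which is exactly the set of \emph{squarefree} monomials of degree $n-i-1$ lying in $I_\Delta^\vee$, in general a proper subset of the generators of $(I_\Delta^\vee)_{\langle n-i-1\rangle}$. The last equivalence then becomes ``each squarefree component has a linear resolution $\iff$ $I_\Delta^\vee$ is componentwise linear'', which is a separate (known) fact for squarefree ideals; your $\mathfrak{m}^rL$ remark is one ingredient in its proof. (Incidentally, the displayed equality $I_\Delta^\vee=\bigcap_{F\in\mathcal{F}(\Delta)}P_{[n]\setminus F}$ is another slip: that intersection is $I_\Delta$ itself, though your subsequent description of the minimal generators of $I_\Delta^\vee$ is correct.) Once these points are fixed, the chain of equivalences goes through.
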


Following \cite{MKA16}, we say that a monomial ideal $I\subset S$ is \textit{vertex splittable} if it can be obtained by the following recursive procedure:
\begin{enumerate}
	\item[(i)] if $u$ is a monomial and $I=(u)$, $I=(0)$ or $I=S$, then $I$ is vertex splittable;\smallskip
	
	\item[(ii)] if there exists a variable $x_i$ and vertex splittable ideals $I_1\subset S$ and $I_2\subset K[x_1,\dots,x_{i-1},x_{i+1},\dots,x_n]$ such that $I=x_iI_1+I_2$, $I_2\subseteq I_1$ and $G(I)$ is the disjoint union of $G(x_iI_1)$ and $G(I_2)$, then $I$ is vertex splittable.
\end{enumerate}\smallskip

In the case (ii), the decomposition $I=x_iI_1+I_2$ is called a \textit{vertex splitting} of $I$ and $x_i$ is called a \textit{splitting vertex} of $I$. Vertex splittable ideals have linear quotients, and so they are componentwise linear \cite{MKA16}. Recall that a monomial ideal $I\subset S$ has \textit{linear quotients} if the set $G(I)$ can be ordered as $u_1,\dots,u_m$ such that for each $i = 2,\ldots,m$, the ideal $(u_1,\ldots,u_{i-1}):(u_i)$ is generated by variables.\smallskip

The next lemma is crucial.

\begin{Lemma}\label{lem:Dual_VertSplit}
	The ideal $B_{\bf t}(u)^{\vee}$ is vertex splittable. In particular one has
	\begin{equation}\label{eq:Dual_split}
	B_{\bf t}(u)^{\vee} = x_1 B'_{{\bf t}}(u)^{\vee} + B_{\bf t'}'(u/x_{j_1})^{\vee}, 	
	\end{equation}
	where $B'_{{\bf t}}(u)=B_{{\bf t}}(u)\cap K[x_2,\ldots,x_n]$ and ${\bf t'}=(t_2,\ldots,t_{d-1})$.
\end{Lemma}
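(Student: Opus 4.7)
The plan is to combine Theorem~\ref{thm:PrimDecom} with an explicit description of the minimal generators of $B_{\bf t}(u)^\vee$ and then to induct on $d$. Since $B_{\bf t}(u) = I_\Delta$ is squarefree, its Alexander dual is generated by the monomials ${\bf x}_{[n]\setminus F}$ as $F$ ranges over $\mathcal{F}(\Delta)$, and by Theorem~\ref{thm:PrimDecom} each such facet is either of the form $\supp_{\bf t}(v)$ for some $v \in G(B_{\bf t}(u))$ or of the form $\supp_{\bf t}(x_{\ell_1}\cdots x_{\ell_{s-1}}x_{j_s}) \cup [j_s+1,n]$. A generator ${\bf x}_{[n]\setminus F}$ is divisible by $x_1$ precisely when $1 \notin F$, and inspecting both types one sees that $1 \in F$ exactly when the leading index $\ell_1$ equals $1$; in that case the entire interval $[1,t_1]$ is forced inside $F$. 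This partitions $G(B_{\bf t}(u)^\vee)$ into a disjoint union $G_1 \sqcup G_2$ of generators divisible and not divisible by $x_1$, respectively.

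I would then match the two pieces with the two summands on the right-hand side of (\ref{eq:Dual_split}). The facets $F$ of $\Delta$ with $1 \notin F$ are exactly the facets of the Stanley-Reisner complex of $B'_{\bf t}(u) \subset K[x_2,\dots,x_n]$, so dividing ${\bf x}_{[n]\setminus F}$ by $x_1$ yields the identification $G_1 = x_1 \cdot G(B'_{\bf t}(u)^\vee)$. Dually, writing a facet $F \ni 1$ as $F = [1,t_1] \cup F''$ with $F'' \subseteq [t_1+1,n]$, the formula in Theorem~\ref{thm:PrimDecom} shows that $F''$ ranges exactly over the facets of the Stanley-Reisner complex of the (appropriately restricted) principal ${\bf t'}$-spread Borel ideal $B'_{\bf t'}(u/x_{j_1})$, so that $G_2 = G(B'_{\bf t'}(u/x_{j_1})^\vee)$. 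This gives the identity~(\ref{eq:Dual_split}).

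With (\ref{eq:Dual_split}) in hand, vertex splittability follows by induction on $d$. Two of the three splitting axioms are immediate from the partition above: no generator of $B'_{\bf t'}(u/x_{j_1})^\vee$ involves $x_1$ (its supports lie in $[t_1+1,n]$), and the generating sets $G(x_1 B'_{\bf t}(u)^\vee)$ and $G(B'_{\bf t'}(u/x_{j_1})^\vee)$ are disjoint. The inductive hypothesis applies because $B'_{\bf t}(u)$, when nonzero, is a squarefree principal ${\bf t}$-spread Borel ideal in fewer variables, while $B'_{\bf t'}(u/x_{j_1})$ is a squarefree principal ${\bf t'}$-spread Borel ideal whose Borel generator has smaller degree $d-1$; in either case the corresponding Alexander dual is vertex splittable by induction, with the base case $d=2$ being direct.

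The hard part will be the remaining splitting condition $B'_{\bf t'}(u/x_{j_1})^\vee \subseteq B'_{\bf t}(u)^\vee$. Via Alexander duality this reduces to the combinatorial claim that for every facet $F$ of $\Delta$ containing $1$, the set $F \setminus \{1\}$ is contained in some facet $F'$ of $\Delta$ with $1 \notin F'$. I would verify this case-by-case, producing for each Type~1 and Type~2 facet $F$ with $1 \in F$ an explicit vertex $i \in [n]\setminus F$, $i \ne 1$, such that $(F \setminus \{1\}) \cup \{i\}$ is still a face of $\Delta$, and then enlarging to a facet avoiding $1$. The degenerate case $j_1 = 1$, in which $B'_{\bf t}(u) = 0$ and $[2,n]$ is the only facet of $\Delta$ not containing $1$, must be handled separately: there $B'_{\bf t}(u)^\vee$ is interpreted as the unit ideal and the required containment becomes automatic.
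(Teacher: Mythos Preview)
Your strategy is the same as the paper's: split the generators of $B_{\bf t}(u)^\vee$ according to divisibility by $x_1$, identify the two pieces with Alexander duals of smaller principal Borel ideals, and induct on $d$ and the number of variables. The paper, however, packages the first two steps by quoting \cite[Lemma~2.2]{MKA16}: for \emph{any} simplicial complex $\Delta$ one has
\[
I_\Delta^\vee \;=\; x_1\,I_{\Delta_1}^\vee + I_{\Delta_2}^\vee,
\qquad I_{\Delta_2}^\vee \subseteq I_{\Delta_1}^\vee,
\]
where $\Delta_1=\{F\in\Delta:1\notin F\}$ is the deletion and $\Delta_2=\{F\in\Delta:1\notin F,\ F\cup\{1\}\in\Delta\}$ is the link of $1$. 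The paper then only needs to check $I_{\Delta_1}=B'_{\bf t}(u)$ and $I_{\Delta_2}=B'_{{\bf t}'}(u/x_{j_1})$, after which the induction runs exactly as you describe.

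This comparison exposes two places where your write-up overshoots. First, your sentence ``the facets $F$ of $\Delta$ with $1\notin F$ are exactly the facets of the Stanley--Reisner complex of $B'_{\bf t}(u)$'' is not literally what is needed (and is not obviously true): the Stanley--Reisner complex of $B'_{\bf t}(u)$ is $\Delta_1$, whose facets can in principle include sets $F^\ast\setminus\{1\}$ for facets $F^\ast$ of $\Delta$ containing $1$. What you actually need is only the equality of \emph{ideals} $I_{\Delta_1}=B'_{\bf t}(u)$, and that follows immediately from the definitions without comparing facet sets. Second, what you flag as ``the hard part'', namely $B'_{{\bf t}'}(u/x_{j_1})^\vee\subseteq B'_{\bf t}(u)^\vee$, is automatic: since $\Delta_2\subseteq\Delta_1$, every facet of $\Delta_2$ lies in some facet of $\Delta_1$, and dualizing gives the inclusion. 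Your proposed case-by-case construction of a vertex $i$ with $(F\setminus\{1\})\cup\{i\}\in\Delta$ is aiming at the stronger (and unnecessary) statement that $F\setminus\{1\}$ sits inside a facet of $\Delta$ itself, not merely of $\Delta_1$. Once you observe the link/deletion containment, this entire step collapses, and your induction then matches the paper's double induction on $d$ and $n$.
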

\begin{proof}
	Let $I=B_{\bf t}(u)=I_{\Delta}$. It follows from \cite[Lemma 2.2]{MKA16} that
	\[
	I^\vee=I_{\Delta}^\vee=x_1I_{\Delta_1}^\vee+I_{\Delta_2}^\vee
	\]
	with $I_{\Delta_2}^\vee\subseteq I_{\Delta_1}^\vee$, and where $\Delta_1$ and $\Delta_2$ are defined as follows
	\[
	\Delta_1=\{F\in\Delta:\ 1\notin F\},\quad \Delta_2=\{F\in\Delta:\ 1\notin F\ \text{and}\ F\cup\{1\}\in\Delta\}.
	\]
	We are going to verify that $I_{\Delta_1}=B_{\bf t}'(u)$ and $I_{\Delta_2}=B_{\bf t'}'(u/x_{j_1})$.
	
	Firstly we prove that $I_{\Delta_1}=B_{\bf t}'(u)$. The inclusion $B_{\bf t}'(u)\subseteq I_{\Delta_1}$ is trivial since the facets of the simplicial complex associated to the squarefree ideal $B_{\bf t}'(u)$ are exactly the facets of $\Delta$ not containing $1$. To prove the other inclusion, let $F\in\Delta$ be a facet and suppose $1\notin\Delta$. By Theorem \ref{thm:PrimDecom}, we have that $F$ can be of the following two forms:
	\begin{enumerate}
		\item[-] $F=\supp_{\bf t}(v)$, where $v\in G(I)$, or
		\item[-] $F=\supp_{\bf t}(x_{\ell_1}x_{\ell_2}\cdots x_{\ell_{s-1}}x_{j_s})\cup [j_s +1 , n]\in\mathcal{G}$, where $s\in\{1,\ldots,d-1\}$ and $x_{\ell_1}x_{\ell_2}\cdots x_{\ell_{s-1}}x_{j_s} \in G(B_{\bf t}(x_{j_1}\cdots x_{j_s}))$.
	\end{enumerate}

	Firstly, let $F=\supp_{\bf t}(v)$ for some $v\in G(I)$. Then $v\in B_{\bf t}'(u)$ since $1\notin F$. Hence ${\bf x}_{[n]\setminus F}\in B'_{{\bf t}}(u)^{\vee}$.
	
	Now, let $F\in\mathcal{G}$ as above. Then $x_{\ell_1}x_{\ell_2}\cdots x_{\ell_{s-1}}x_{j_s} \in B_{\bf t}(x_{j_1}\cdots x_{j_s})\cap K[x_2,\ldots,x_n]$ since $1\notin F$. It follows that ${\bf x}_{[n]\setminus F}\in (B_{\bf t}(x_{j_1}\cdots x_{j_s})\cap K[x_2,\ldots,x_n])^{\vee}$. Note that, since $B_{\bf t}(u)\subset B_{\bf t}(x_{j_1}\cdots x_{j_s})$, one has $(B_{\bf t}(x_{j_1}\cdots x_{j_s})\cap K[x_2,\ldots,x_n])^{\vee}\subset B'_{\bf t}(u)^{\vee}$. Hence, ${\bf x}_{[n]\setminus F}\in B'_{{\bf t}}(u)^{\vee}$ once again. This proves that $I_{\Delta_1}\subseteq B'_{\bf t}(u)$.
	
	With an analogous argument, one can verify that $I_{\Delta_2}=B_{\bf t'}'(u/j_1)$.	
	
	Finally, we prove that $I^\vee$ is vertex splittable. We proceed by double induction on the initial degree $d$ of $I$ and on the number of variables.
	
	If $d=1$, then $I$ is a monomial prime ideal, $I^\vee$ is a monomial principal ideal and the statement follows for any number of variables.
	
	Now, let $d>1$. Notice that $n\ge 1+t_1+\dots+t_{d-1}$, otherwise $I=(0)$. We proceed by induction on $n\ge 1+t_1+\dots+t_{d-1}$. If $n=1+t_1+\dots+t_{d-1}$ then $I=(x_1x_{1+t_1}\cdots x_{1+t_1+\dots+t_{d-1}})$ and so $I^\vee=(x_1,x_{1+t_1},\dots,x_{1+t_1+\dots+t_{d-1}})$ is a monomial prime ideal which is vertex splittable. Now let $n>1+t_1+\dots+t_{d-1}$. By the first part of the proof we have
	\begin{equation}\label{eq:I_split}
	I=x_1B_{\bf t}'(u)+B_{\bf t'}'(u/x_{j_1}).	
	\end{equation}
	Notice that $B_{\bf t}'(u)$ can be regarded as an ideal of $K[x_2,\dots,x_n]$ and so by induction on $n$ is vertex splittable. Moreover, since $\deg(u/x_{j_1})=d-1<d$, by induction on $d$ the ideal $B_{\bf t'}'(u/x_{j_1})$ is also vertex splittable. Since $B_{\bf t'}'(u/x_{j_1})=I_{\Delta_2}\subseteq I_{\Delta_1}=B_{\bf t}'(u)$, it follows that $I$ is also vertex splittable, and this ends the inductive proof.
\end{proof}

Now, we are in the position to prove Theorem \ref{thm:Borel_SCM}.
\begin{proof}[Proof of Theorem \ref{thm:Borel_SCM}]
	Let $I=B_{\bf t}(u)$. By Lemma \ref{lem:Dual_VertSplit}, $I^{\vee}$ is vertex splittable, hence by \cite[Theorem 2.4]{MKA16} it has linear quotients. Therefore, by \cite[Corollary 8.2.22]{JT}, $I^{\vee}$ is componentwise linear. It follows, by \cite[Theorem 8.2.20]{JT} that $I$ is sequentially Cohen-Macaulay as stated.
\end{proof}

Lemma \ref{lem:Dual_VertSplit} combined with \cite[Theorem 2.3]{MKA16} imply immediately

\begin{Corollary}\label{cor:Delta_decomposable}
	Let $B_{\bf t}(u)=I_\Delta$. Then $\Delta$ is vertex decomposable.
\end{Corollary}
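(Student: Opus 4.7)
The plan is to recognize that this corollary is essentially a dictionary translation of Lemma~\ref{lem:Dual_VertSplit} through Alexander duality. Moradi and Khosh-Ahang establish in \cite{MKA16} a correspondence between the vertex splittability of a squarefree monomial ideal and the vertex decomposability of the simplicial complex dual to it; concretely, for $I=I_\Delta$ squarefree, $I^{\vee}$ is vertex splittable if and only if $\Delta$ is vertex decomposable. Since Lemma~\ref{lem:Dual_VertSplit} already shows that $B_{\bf t}(u)^{\vee}$ is vertex splittable, invoking this correspondence yields the claim at once.

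If one wants to see the correspondence transparently for the ideal at hand, the splitting
\[
B_{\bf t}(u)^{\vee} \;=\; x_1\, B'_{\bf t}(u)^{\vee} \;+\; B'_{\bf t'}(u/x_{j_1})^{\vee}
\]
from equation~(\ref{eq:Dual_split}) is precisely the Alexander-dual shadow of a shedding decomposition of $\Delta$ at vertex~$1$. Under this dictionary, the summand $x_1\,B'_{\bf t}(u)^{\vee}$ encodes the deletion $\Delta\setminus\{1\}$, whose Stanley-Reisner ideal is $B'_{\bf t}(u)$, while $B'_{\bf t'}(u/x_{j_1})^{\vee}$ encodes the link $\mathrm{lk}_\Delta(1)$, whose Stanley-Reisner ideal is $B'_{\bf t'}(u/x_{j_1})$. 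The identifications $I_{\Delta_1}=B'_{\bf t}(u)$ and $I_{\Delta_2}=B'_{\bf t'}(u/x_{j_1})$ were already carried out inside the proof of Lemma~\ref{lem:Dual_VertSplit}, so no further combinatorial bookkeeping is needed.

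The main (and essentially only) obstacle has therefore already been overcome in Lemma~\ref{lem:Dual_VertSplit}: once vertex splittability of the Alexander dual is in hand, vertex decomposability of $\Delta$ is a formal consequence via \cite[Theorem~2.3]{MKA16}. Thus the proof reduces to a single line citing that theorem, together with Lemma~\ref{lem:Dual_VertSplit}.
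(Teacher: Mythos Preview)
Your proposal is correct and follows exactly the paper's approach: the corollary is stated as an immediate consequence of Lemma~\ref{lem:Dual_VertSplit} together with \cite[Theorem~2.3]{MKA16}. Your additional remarks about the deletion/link interpretation are accurate but superfluous for the argument.
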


The next example illustrates the previous results.
\begin{Example}\label{ex1} \em
	Let ${\bf t}=(2,1)$ and $I=B_{\bf t}(x_2x_5x_8)\subset K[x_1,\ldots,x_8]$.  
	Following Lemma \ref{lem:Dual_VertSplit}, we can write $I=x_1I_2+I_1$ where
	\begin{align*}
		I_1\ &=\ (x_3x_4,\,x_3x_5,\,x_3x_6,\,x_3x_7,\,x_3x_8,\,x_4x_5,\,x_4x_6,\,x_4x_7,\,x_4x_8,\,x_5x_6,\,x_5x_7,\,x_5x_8),\\
		I_2\ &=\ (x_2x_4x_5,\,x_2x_4x_6,\,x_2x_4x_7,\,x_2x_4x_8,\,x_2x_5x_6,\,x_2x_5x_7,\,x_2x_5x_8).
	\end{align*}
    Let ${\bf t}'=(1)$. Then $I^{\vee}=x_1I_1^{\vee}+I_2^{\vee}$ is vertex splittable, with
	\begin{align*}
	I_1^{\vee}\ &=\ B'_{\bf t}(x_2x_5x_8)^\vee\ =\ (x_2,\,x_4x_5,\,x_4x_6x_7x_8,\,x_5x_6x_7x_8),\\
	I_2^{\vee}\ &=\ B'_{{\bf t}'}(x_5x_8)^\vee\ =\ (x_3x_4x_5,\,x_3x_4x_6x_7x_8,\,x_3x_5x_6x_7x_8,\,x_4x_5x_6x_7x_8).
	\end{align*}
    Using the {\em Macaulay2} \cite{GDS} package \texttt{SCMAlgebras} \cite{L_pack} one can verify that $I$ is sequentially Cohen-Macaulay. Alternatively, using the package \texttt{HomologicalShiftIdeals} \cite{FPack} we can check that $I^\vee$ has linear quotients, thus it is componentwise linear.
\end{Example}

\section{Symbolic powers of $B_{\bf t}(u)$}\label{sec3}

Let $I=P_{F_1}\cap\dots\cap P_{F_m}$ be the minimal primary decomposition of a squarefree monomial ideal $I\subset S$. We say that $I$ is \textit{normally torsionfree} if $\Ass(I^k)\subseteq\Ass(I)$ for all $k\ge1$. The $k$th \textit{symbolic power} of $I$ is the monomial ideal defined as
\[
I^{(k)}\ = P_{F_1}^k\cap\dots\cap P_{F_m}^k.
\]

We have $I^k\subseteq I^{(k)}$ for all $k\ge1$. By \cite[Theorem 1.4.6]{JT}, the equality $I^{(k)}=I^k$ holds for all $k\ge1$ if and only if $I$ is normally torsionfree.\smallskip

The goal of this last section is to prove the following characterization.

\begin{Theorem}\label{Thm:Bt(u)SP}
	Let $I=B_{\bf t}(u)$. The following statements are equivalent.
	\begin{enumerate}
		\item[\textup{(a)}] $I$ is normally torsionfree.
		\item[\textup{(b)}] $I^{(k)}=I^k$, for all integers $k\ge1$.
		\item[\textup{(c)}] $j_{i}\le \sum_{s=1}^i t_s$ for all $i=1,\dots,d-1$.
	\end{enumerate}
\end{Theorem}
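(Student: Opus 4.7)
My plan is to dispose of $(a)\Leftrightarrow(b)$ immediately, since it is a direct consequence of \cite[Theorem 1.4.6]{JT}, and then concentrate on $(b)\Leftrightarrow(c)$. The primary decomposition in Theorem~\ref{thm:PrimDecom} is the main structural tool, and I would use it in tandem with the recursive description of $B_{\bf t}(u)$.

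For $(c)\Rightarrow(b)$, I would induct on $d$, mimicking the splitting
\[
I \;=\; x_1\,B'_{\bf t}(u) + B'_{{\bf t}'}(u/x_{j_1})
\]
that drove the proof of Lemma~\ref{lem:Dual_VertSplit}, where ${\bf t}'=(t_2,\dots,t_{d-1})$. The base case $d=1$ is immediate since $I$ is then prime. In the inductive step, the first task is to verify that the bound $j_s\le t_1+\cdots+t_s$ descends to each of the two summands (for the first via a reindexing of variables, for the second via the truncation ${\bf t}\mapsto{\bf t}'$), which is a direct numerical check. The second task is to lift the equality $I^{(k)}=I^k$ from the two smaller ideals back to $I$; I would do this by comparing the primary decompositions of $I^{(k)}$ and $I^k$ facet-by-facet, using the explicit list of facets supplied by Theorem~\ref{thm:PrimDecom}.

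For $(b)\Rightarrow(c)$, I would argue the contrapositive by exhibiting a specific monomial witnessing $I^{(k)}\ne I^k$. Let $i$ be the smallest index with $j_i>\sum_{s=1}^{i}t_s$. The guiding example is the triangle $C_3$, for which $x_1x_2x_3\in I^{(2)}\setminus I^2$ because three height-two minimal primes overlap in an ``odd-hole'' configuration. My plan is to extract, from the primary decomposition in Theorem~\ref{thm:PrimDecom}, a family of facets whose complements form such an obstruction among the variables $x_{j_1},\dots,x_{j_i}$; the quantitative failure of (c) at index $i$ is expected to provide exactly the slack needed for a monomial built from these variables to satisfy the height-count defining membership in each $P_{[n]\setminus F}^{k}$, while being too small in degree to belong to $I^k$.

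The principal obstacle is the $(b)\Rightarrow(c)$ direction, explicitly flagged by the authors: the argument in \cite[Theorem 5.13]{NQKR} hinges on the monotonicity $t_1\ge\cdots\ge t_{d-1}$, so the combinatorial obstruction producing the witness monomial must be pinpointed uniformly in ${\bf t}$. Identifying the correct family of facets and carrying out the degree bookkeeping so that it works for an arbitrary spread vector is where the genuinely new input beyond \cite{NQKR} will be required.
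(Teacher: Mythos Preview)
Your handling of $(a)\Leftrightarrow(b)$ matches the paper. For $(b)\Rightarrow(c)$ your contrapositive strategy is the right one, but you are missing the two ingredients that make it go through: the paper first performs a monomial localization at $Q=P_{[(\sum_{s=1}^{\ell-1}t_s)+1,\,n]}$ to reduce to the case where the failure of (c) occurs already at the first index (so $j_1\ge t_1+1$), and then exhibits the explicit witness
\[
w\;=\;(x_1x_2\cdots x_{t_1+1})(x_{j_2}\cdots x_{j_d})\ \in\ I^{(2)}\setminus I^2,
\]
checking membership in each $P^2$ case by case via Theorem~\ref{thm:PrimDecom} and non-membership in $I^2$ by a short divisibility argument. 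Your sketch does not pin down either step.

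The plan for $(c)\Rightarrow(b)$ has a genuine gap. First, the displayed identity $I = x_1\,B'_{\bf t}(u) + B'_{{\bf t}'}(u/x_{j_1})$ is false for $I$; what Lemma~\ref{lem:Dual_VertSplit} actually establishes is the corresponding splitting of $I^\vee$. Second, and decisively, condition (c) does \emph{not} descend to the degree-$(d{-}1)$ piece under ${\bf t}\mapsto{\bf t}'$. Take ${\bf t}=(2,2)$ and $u=x_2x_4x_6$: then (c) holds ($2\le 2$, $4\le 4$), but $B'_{(2)}(x_4x_6)\subset K[x_2,\dots,x_6]$ becomes, after reindexing, $B_{(2)}(x_3x_5)$, for which (c) fails ($3\not\le 2$); indeed this ideal contains the triangle $\{x_2x_4,\,x_4x_6,\,x_2x_6\}$ and is not normally torsionfree. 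So your inductive hypothesis is unavailable, and the unexplained ``lifting'' step is never reached. The paper's argument for $(c)\Rightarrow(a)$ proceeds along an entirely different route: it applies the criterion of Theorem~\ref{Thm:useful} with the Borel generator $u$ itself as the distinguished monomial, uses Theorem~\ref{thm:PrimDecom} together with (c) to show that each $P\in\Ass(I)$ contains exactly one of $x_{j_1},\dots,x_{j_d}$ (so $u\in P\setminus P^2$), identifies each restriction $I^{\le{\bf 1}-{\bf e}_{j_i}}$ as another principal ${\bf t}$-spread Borel ideal still satisfying (c) but with strictly smaller $\sum_r j_r$, and closes the induction on that quantity.
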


The proof of this theorem requires some preparation. Let $I\subset S$ be a monomial ideal and let ${\bf a}=(a_1,\dots,a_n)\in\ZZ_{\ge0}^n$. The ${\bf a}$\textit{-restriction} of $I$ is defined as the monomial ideal $I^{\le{\bf a}}\subset S$ with minimal generating set 
\[
G(I^{\le{\bf a}})=\{x_1^{b_1}\cdots x_n^{b_n}\in G(I):\ b_i\le a_i\ \textup{for all}\ i\}.
\]

We recall the following result proved in \cite[Theorem 3.7]{SNQ} (see also \cite[Theorem 1.2]{SNQ2}). Let ${\bf e}_i\in\ZZ_{\ge0}^n$ be the vector with $i$th component equal to 1 and with the other components equal to zero, and set ${\bf 1}={\bf e}_1+\dots+{\bf e}_n=(1,\dots,1)\in\ZZ_{\ge0}^n$.
\begin{Theorem}\label{Thm:useful}
	Let $I\subset S$ be a squarefree monomial ideal. Suppose there exists a squarefree monomial $v\in S$ such that $v\in P\setminus P^2$ for all $P\in\Ass(I)$, and $(I^{\le{\bf 1}-{\bf e}_i})^{(k)}=(I^{\le{\bf 1}-{\bf e}_i})^k$ for all $k\ge1$ and all $i\in\supp(v)$. Then $I^{(k)}=I^k$ for all $k\ge1$.
\end{Theorem}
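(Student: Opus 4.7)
The plan is to prove $I^{(k)} \subseteq I^k$ by induction on $k$, the reverse containment being automatic. The base $k=1$ is trivial. Assume $I^{(k-1)} = I^{k-1}$, and let $w$ be a minimal monomial generator of $I^{(k)}$. I will split into two cases depending on whether $v$ divides $w$. The hypothesis that $v \in P \setminus P^2$ for every $P \in \Ass(I)$, combined with the fact that each associated prime of a squarefree monomial ideal is a monomial prime generated by a subset of variables, translates to $|\supp(v) \cap \supp(P)| = 1$ for all $P \in \Ass(I)$.

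For the case $v \mid w$, write $w = v w'$ and use the valuation $\nu_P(m) := \sum_{j \in \supp(P)} \deg_{x_j}(m)$, which satisfies $m \in P^s$ if and only if $\nu_P(m) \ge s$ when $P$ is a monomial prime. The assumption on $v$ gives $\nu_P(v) = 1$ for every $P \in \Ass(I)$, and $w \in I^{(k)}$ gives $\nu_P(w) \ge k$, so $\nu_P(w') = \nu_P(w) - 1 \ge k - 1$. Thus $w' \in I^{(k-1)} = I^{k-1}$ by induction. Moreover, $v$ is squarefree and belongs to every associated prime, hence $v \in \bigcap_{P \in \Ass(I)} P = \sqrt{I} = I$, yielding $w = v w' \in I \cdot I^{k-1} = I^k$.

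For the case where some $i \in \supp(v)$ satisfies $x_i \nmid w$, set $J := I^{\le \mathbf{1} - \mathbf{e}_i}$; by hypothesis $J^{(k)} = J^k$, and $J \subseteq I$ gives $J^k \subseteq I^k$, so it suffices to show $w \in Q^k$ for every $Q \in \Ass(J)$. Since $J$ is squarefree, each such $Q$ is a minimal prime $P_C$ for a minimal vertex cover $C \subseteq [n]\setminus\{i\}$ of $G(J)$. The set $C \cup \{i\}$ is then a vertex cover of $G(I)$, as it covers the generators of $J$ via $C$ and the remaining generators of $I$ (those divisible by $x_i$) via $i$; consequently there exists $P \in \Ass(I)$ with $\supp(P) \subseteq C \cup \{i\}$. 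Using $\deg_{x_i}(w) = 0$ and $\supp(P) \setminus \{i\} \subseteq C$, one obtains
\[
\nu_Q(w) \;=\; \sum_{j \in C} \deg_{x_j}(w) \;\ge\; \sum_{j \in \supp(P) \setminus \{i\}} \deg_{x_j}(w) \;=\; \nu_P(w) \;\ge\; k,
\]
which gives $w \in Q^k$, as required.

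I do not expect a serious obstacle: the entire argument rests on two observations, namely that $v \in I$ (which follows immediately from squarefreeness of $I$) and that every vertex cover of $G(J)$ extends to a vertex cover of $G(I)$ by adjoining $i$. The one point requiring explicit care is confirming that associated primes of a squarefree monomial ideal coincide with its minimal primes, which is what justifies the vertex cover description used in the second case and allows the valuation estimate to be read off combinatorially.
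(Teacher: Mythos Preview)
The paper does not supply its own proof of this statement: it is quoted verbatim as \cite[Theorem 3.7]{SNQ} (with the correction \cite{SNQ2}), so there is nothing in the present paper to compare your argument against.

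That said, your argument is sound. The two ingredients you isolate---that $v\in\bigcap_{P\in\Ass(I)}P=\sqrt{I}=I$ in Case~1, and that every minimal vertex cover of $G(J)$ extends by adjoining $i$ to a vertex cover of $G(I)$ in Case~2---are exactly what is needed, and the valuation bookkeeping with $\nu_P$ is correct. One small point you leave implicit: in Case~2 you use $i\notin C$ for a minimal vertex cover $C$ of $G(J)$; this holds because no generator of $J$ involves $x_i$, so removing $i$ from any cover of $G(J)$ still yields a cover, whence $i$ cannot appear in a minimal one. A second implicit point is the degenerate situation $J=0$ (i.e.\ every generator of $I$ is divisible by $x_i$); in that case $(x_i)\in\Ass(I)$, so $w\in(x_i)^k$ forces $x_i\mid w$ and Case~2 cannot occur for this $i$. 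With these two remarks made explicit, the proof is complete.
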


The following lemma will be needed later.
\begin{Lemma}\label{Lem:FM}
	Let $J\subset S$ be a monomial ideal, let $u\in S$ be a monomial and set $I=uJ$. Then, $I^{(k)}=I^k$ for all $k\ge1$, if and only if, $J^{(k)}=J^k$ for all $k\ge1$
\end{Lemma}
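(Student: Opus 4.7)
The plan is to derive the minimal primary decomposition of $I=uJ$ from that of $J$ and then compare symbolic and ordinary powers directly. Since the paper defines symbolic powers only for squarefree monomial ideals, the equation $I=uJ$ implicitly forces $I$ to be squarefree; in particular, $u$ itself is squarefree and $\supp(u)$ is disjoint from $\bigcup_{g\in G(J)}\supp(g)$ (otherwise $I$ would contain a non-squarefree monomial). Set $U=\supp(u)$ and fix the minimal primary decomposition $J=\bigcap_{G\in\Gamma}P_G$; every $G\in\Gamma$ is then a subset of $[n]\setminus U$.

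The first step would be to establish the identity $(u)\cap J = uJ$. For a monomial $m$ with $u\mid m$ and $m\in J$, one writes $m=um'$; since $u$ shares no variable with any generator of $J$, the relation $m\in J$ is equivalent to $m'\in J$, so $m\in uJ$, the reverse inclusion being obvious. Combining this with $(u)=\bigcap_{i\in U}(x_i)$ yields
\[
I=\bigcap_{i\in U}(x_i)\cap\bigcap_{G\in\Gamma}P_G,
\]
and this decomposition is minimal: no $(x_i)$ can be redundant since the intersection of the remaining components contains a monomial of the form $(u/x_i)g$ with $g\in G(J)$, which is not divisible by $x_i$; and the essentiality of each $P_G$ follows from the minimality of the original decomposition of $J$ together with the disjointness $U\cap\supp(J)=\emptyset$.

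Next, applying the definition of symbolic powers, and repeating the identity $(v)\cap L = vL$ with $v=u^k$ and $L=J^{(k)}$ (note that $J^{(k)}=\bigcap_G P_G^k$ is again generated by monomials in variables outside $U$), one obtains
\[
I^{(k)}=\bigcap_{i\in U}(x_i^k)\cap\bigcap_{G\in\Gamma}P_G^k=(u^k)\cap J^{(k)}=u^k J^{(k)}.
\]
Comparing this with $I^k = u^k J^k$ and using that $u^k$ is a nonzerodivisor in $S$, one concludes that $I^{(k)}=I^k$ if and only if $J^{(k)}=J^k$, for each $k\ge 1$, which yields the claim.

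The main obstacle is keeping careful track of which variables appear where: the minimality of the displayed primary decomposition of $I$ and the identity $(u^k)\cap J^{(k)} = u^k J^{(k)}$ both hinge on the disjointness $\supp(u)\cap\supp(J)=\emptyset$. Once this disjointness is set up, the rest reduces to routine monomial bookkeeping.
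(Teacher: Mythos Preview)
Your argument is correct and lands on the same key identity $I^{(k)}=u^kJ^{(k)}$ that drives the paper's proof. The only difference is how that identity is obtained: the paper simply quotes \cite[Proposition~6.3]{FM}, which establishes $I^{(k)}=u^kJ^{(k)}$ for arbitrary monomial ideals, whereas you derive it by hand from the explicit primary decomposition after restricting to the squarefree case. Your restriction is legitimate in context---the paper only defines $I^{(k)}$ for squarefree $I$, and only the squarefree instance is invoked later in the proof of Theorem~\ref{Thm:Bt(u)SP}---and it buys you a fully self-contained argument with no external citation. The cited result, on the other hand, covers the lemma exactly as stated (no squarefreeness hypothesis on $J$ or $u$), at the price of appealing to another paper.
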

\begin{proof}
	It is shown in \cite[Proposition 6.3]{FM} that $I^{(k)}=u^kJ^{(k)}$ for all $k\ge1$. The assertion now follows at once.
\end{proof}

Let $I\subset S$ be a graded ideal and let $\mathcal{R}(I)=\bigoplus_{k\ge0}I^kt^k$ be its Rees algebra. The \textit{analytic spread} of $I$, denoted by $\ell(I)$, is defined as the Krull dimension of the ring $\mathcal{R}(I)/\m\mathcal{R}(I)=\bigoplus_{k\ge0}(I^k/\m I^k)t^k$, where $\m=(x_1,\dots,x_n)$.
\begin{Proposition}\label{Prop:useful}
	Let $I\subset S$ be a graded ideal such that $\mathcal{R}(I)$ is Cohen-Macaulay. Suppose that $\ell(I)<n$. Then $\m=(x_1,\dots,x_n)\notin\Ass(I^k)$ for all $k\gg0$.
\end{Proposition}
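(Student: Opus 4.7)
The plan is to combine two complementary depth estimates for the powers of $I$: Brodmann's inequality as an upper bound, and a matching lower bound coming from the Cohen-Macaulayness of $\mathcal{R}(I)$. Together these will force $\depth(S/I^k)$ to stabilize at a strictly positive value, which is exactly the condition that $\m$ not be an associated prime of $I^k$ for $k\gg 0$.

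First I would recall Brodmann's two classical theorems. His stability result guarantees that the sets $\Ass(S/I^k)$ are eventually independent of $k$, and in particular the sequence $\depth(S/I^k)$ stabilizes. His sharper inequality then supplies the upper bound
\[
\lim_{k\to\infty}\depth(S/I^k)\ \le\ \dim S-\ell(I)\ =\ n-\ell(I).
\]
Next I would invoke the classical matching lower bound, due to Huneke and refined by several authors: whenever $\mathcal{R}(I)$ is Cohen-Macaulay one has
\[
\depth(S/I^k)\ \ge\ n-\ell(I)\qquad\text{for every}\ k\ge 1.
\]
Chaining these two inequalities yields $\depth(S/I^k)=n-\ell(I)$ for all $k\gg 0$. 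Since by hypothesis $\ell(I)<n$, this common limit is strictly positive, and consequently the graded maximal ideal $\m$ cannot be an associated prime of $S/I^k$ for $k\gg 0$. Translating back, $\m\notin\Ass(I^k)$ for all sufficiently large $k$, which is the desired conclusion.

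The main obstacle, in my view, is not the overall shape of the argument but rather the lower bound $\depth(S/I^k)\ge n-\ell(I)$ under the Cohen-Macaulay hypothesis on the Rees algebra. That inequality is genuinely non-elementary; it ultimately rests on an analysis of the fiber cone $\mathcal{R}(I)/\m\mathcal{R}(I)$ and the associated graded ring, and in the write-up it will have to be cited rather than reproved. Once it is in hand, however, the rest of the proof reduces to a one-line combination of Brodmann's stability theorem, Brodmann's inequality, and the numerical assumption $\ell(I)<n$.
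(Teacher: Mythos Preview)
Your argument is correct and follows essentially the same route as the paper, which simply cites \cite[Proposition 10.3.2]{JT} for the equality $\lim_{k\to\infty}\depth S/I^k=n-\ell(I)$ under the Cohen--Macaulay hypothesis on $\mathcal{R}(I)$ and then observes that this limit is positive. Note, incidentally, that Brodmann's upper bound is superfluous here: the lower bound $\depth(S/I^k)\ge n-\ell(I)$ alone already yields $\m\notin\Ass(I^k)$ for \emph{every} $k\ge 1$, which is stronger than the asymptotic statement being proved.
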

\begin{proof}
	By \cite[Proposition 10.3.2]{JT}, $\lim_{k\rightarrow\infty}\depth\,S/I^k=n-\ell(I)$. Our assumption implies that $\depth\,S/I^k>0$ for all $k\gg0$. Hence $\m\notin\Ass(I^k)$ for all $k\gg0$.
\end{proof}
\begin{Corollary}\label{Cor:useful}
	Let $I=B_{\bf t}(u)\subset S$ be a squarefree principal vector-spread Borel ideal. If $\ell(I)<n$ then $\m\notin\Ass(I^k)$ for all $k\ge1$.
\end{Corollary}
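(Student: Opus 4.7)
The plan is to deduce the corollary from Proposition~\ref{Prop:useful} by upgrading the asymptotic conclusion ``$k\gg 0$'' to ``all $k\ge 1$'' via a persistence argument for the maximal ideal in the associated primes of the powers of $I$.

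To apply Proposition~\ref{Prop:useful} we need $\mathcal{R}(I)$ to be Cohen-Macaulay, where $I=B_{\bf t}(u)$. Since $B_{\bf t}(u)$ is an equigenerated squarefree ${\bf t}$-spread strongly stable ideal, its minimal generating set $G(B_{\bf t}(u))$ is sortable in the sense of Sturmfels. Consequently, the toric ideal presenting $\mathcal{R}(I)$ admits a squarefree quadratic Gr\"obner basis, and $\mathcal{R}(I)$ is a normal, Cohen-Macaulay algebra. Combined with the hypothesis $\ell(I)<n$, Proposition~\ref{Prop:useful} then yields $\m\notin\Ass(I^k)$ for every $k\gg 0$.

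To promote this asymptotic statement to all $k\ge 1$, I appeal to the persistence of $\m$ in $\Ass(I^k)$: for $I=B_{\bf t}(u)$ the sequence $k\mapsto\depth S/I^k$ is non-increasing, equivalently, $\m\in\Ass(I^k)$ forces $\m\in\Ass(I^{k+1})$. Depth monotonicity for powers is a known feature of polymatroidal ideals and transfers to the squarefree vector-spread case via the sortable/Cohen-Macaulay structure of $\mathcal{R}(I)$. Indeed, if $\m\in\Ass(I^{k_0})$ for some $k_0\ge 1$, persistence gives $\m\in\Ass(I^k)$ for all $k\ge k_0$, contradicting the previous step. Therefore $\m\notin\Ass(I^k)$ for every $k\ge 1$, as required.

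The main obstacle is a clean justification of the persistence (equivalently, depth monotonicity) for squarefree principal vector-spread Borel ideals. While this is folklore in the ordinary polymatroidal setting, its extension to the squarefree vector-spread framework must be derived either by a direct combinatorial verification that the sorted product of two generators of $B_{\bf t}(u)$ lies in $B_{\bf t}(u)^2$, or by invoking general results that relate $\depth S/I^k$ to the fiber cone once $\mathcal{R}(I)$ is known to be Cohen-Macaulay. All other ingredients are formal consequences of Proposition~\ref{Prop:useful} once this persistence is in hand.
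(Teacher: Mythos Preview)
Your overall architecture is the same as the paper's: show that $\mathcal{R}(I)$ is Cohen--Macaulay, apply Proposition~\ref{Prop:useful} to obtain $\m\notin\Ass(I^k)$ for $k\gg 0$, and then upgrade to all $k\ge 1$ via a persistence statement. The difference lies in how the two ingredients ``$\mathcal{R}(I)$ is Cohen--Macaulay'' and ``persistence'' are secured, and here your argument has genuine gaps.

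First, your route to the Cohen--Macaulayness of $\mathcal{R}(I)$ relies on the claim that $G(B_{\bf t}(u))$ is sortable. Sortability is known for \emph{constant} $t$-spread principal Borel ideals, but for a general vector ${\bf t}=(t_1,\dots,t_{d-1})$ the sorting of two ${\bf t}$-spread monomials need not produce ${\bf t}$-spread monomials when the $t_i$ differ; at the very least, this step requires an argument you have not supplied. Second, the persistence step you flag as ``the main obstacle'' is not resolved: depth monotonicity for powers does not follow formally from Cohen--Macaulayness of the Rees algebra, and the analogy with polymatroidal ideals is only heuristic here.

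The paper closes both gaps simultaneously and with a single external input: $B_{\bf t}(u)$ is vertex splittable by \cite[Proposition~1]{CF2024}, and Moradi's theorem \cite[Theorem~3.1(a)--(b)]{M2023} shows that for any vertex splittable ideal one has both that $\mathcal{R}(I)$ is Cohen--Macaulay \emph{and} that $\Ass(I)\subseteq\Ass(I^2)\subseteq\Ass(I^3)\subseteq\cdots$. The ascending chain gives persistence of $\m$ immediately, so the asymptotic conclusion from Proposition~\ref{Prop:useful} extends to all $k\ge 1$. Replacing your sortability/polymatroidal sketch with this vertex-splittability route makes the argument complete.
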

\begin{proof}
	By \cite[Proposition 1]{CF2024}, $I$ is vertex splittable. By \cite[Theorem 3.1(a)-(b)]{M2023} it follows that $\mathcal{R}(I)$ is Cohen-Macaulay and $\Ass(I)\subseteq\Ass(I^2)\subseteq\Ass(I^3)\subseteq\cdots$. Proposition \ref{Prop:useful} and our assumption imply that $\m\notin\Ass(I^k)$ for all $k\gg0$. The previous ascending chain of inclusions yields that $\m\notin\Ass(I^k)$ for all $k\ge1$.
\end{proof}

Let $I\subset S$ be a monomial ideal with $G(I)=\{u_1,\dots,u_m\}$. The \textit{linear relation graph} $\Gamma$ of $I$ is defined as the graph with the edge set 
\[
E(\Gamma)\ =\ \{\{i,j\}: \text{there exist $u_k,u_\ell\in G(I)$ such that $x_iu_k=x_ju_\ell$}\},
\]
and with the vertex set $V(\Gamma)=\bigcup_{\{i,j\}\in E(\Gamma)}\{i,j\}$.

If $I\subset S$ is a monomial ideal generated in degree $d$ having linear relations (e.g. $I$ has linear resolution) then \cite[Lemma 5.2]{DHQ} guarantees that
\begin{equation}\label{eq:ell}
	\ell(I)=r-s+1
\end{equation}
where $r$ is the number of vertices and $s$ is the number of connected components of the linear relation graph of $I$.

Let $I=B_{\bf t}(u)$ as in the Setup \ref{setup}, and define the sets
\[B_r\ =\ \big[\sum_{s=1}^{r-1}t_s+1,\,j_r\big], \quad r=1,\dots,d.\]

Notice that $v=x_{i_1}\cdots x_{i_d} \in G(I)$ if and and only if $i_r\in B_r$, for all $r=1, \ldots, d$, and $i_r - i_{r-1}\ge t_r$ for all $r=2,\dots,d$.

\begin{Proposition}\label{Prop:Gamma}
	Let $I=B_{\bf t}(u)$ and 
	$B_i=[\sum_{s=1}^{i-1}t_s+1,j_i]$ for $i=1,\dots,d$. Let $\Gamma$ be the linear relation graph of $I$. The following statements hold.
	\begin{enumerate}
		\item[\textup{(a)}] We have $|B_i|\ge2$ if and only if $B_i\subseteq V(\Gamma)$. 
		\item[\textup{(b)}] Suppose that $j_i\le\sum_{s=1}^it_s$ for all $i=1,\dots,d-1$. Then $B_p\cap B_q=\emptyset$ for all $1\le p<q\le d$ and $\m\notin\Ass(I^k)$ for all $k\ge1$.
	\end{enumerate}
\end{Proposition}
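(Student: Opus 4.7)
To prove (a), I would first note the standard swap characterization of the linear relation graph in the squarefree setting: $\{p,q\}\in E(\Gamma)$ if and only if there exists a squarefree monomial $m$ of degree $d-1$ with $p,q\notin\supp(m)$ such that both $x_p m$ and $x_q m$ lie in $G(I)$. For the forward direction, I would fix $p\in B_i$ with $|B_i|\ge 2$ and exhibit two generators of $I$ that differ only at position $i$, with values $p$ and either $p+1$ (when $p<j_i$) or $p-1$ (when $p=j_i$; the latter is available since $|B_i|\ge 2$). The remaining $d-1$ coordinates will be chosen minimally: $i_r=\sum_{s<r}t_s+1$ for $r<i$, and for $r>i$ the smallest value compatible with position $i$ and the gap conditions $i_r-i_{r-1}\ge t_{r-1}$. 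A direct check using the ${\bf t}$-spread condition on $u$ shows that both constructed monomials satisfy the Borel bound $i_r\le j_r$ at every position, giving an edge $\{p,p\pm 1\}\in E(\Gamma)$ and hence $p\in V(\Gamma)$.

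For the reverse direction of (a), I will argue contrapositively. If $|B_i|=1$, then $B_i=\{p\}$ with $p=j_i=\sum_{s<i}t_s+1$, so every generator of $I$ carries $x_p$ at position $i$. If $p$ were in $V(\Gamma)$, then $x_p m,x_q m\in G(I)$ for some $q\ne p$ and some $m$ with $p\notin\supp(m)$; but the generator $x_q m$ must still carry $x_p$ at position $i$, forcing $p\in\supp(m)$, a contradiction. As for (b), the disjointness $B_p\cap B_q=\emptyset$ for $p<q\le d$ is immediate from the hypothesis: since $p\le d-1$, $\max B_p=j_p\le\sum_{s=1}^p t_s\le\sum_{s=1}^{q-1}t_s<\min B_q$.

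To establish $\m\notin\Ass(I^k)$, I will invoke Corollary \ref{Cor:useful}, which reduces the claim to $\ell(I)<n$. Since $I$ has linear resolution, formula \eqref{eq:ell} gives $\ell(I)=r-s+1$ where $r=|V(\Gamma)|$ and $s$ is the number of connected components of $\Gamma$. Any edge $\{p,q\}\in E(\Gamma)$ comes from generators $x_p m, x_q m\in G(I)$; since the $B_i$'s are disjoint and pairwise ordered, $\supp(m)$ hits the same $d-1$ blocks in both generators, forcing $p$ and $q$ to lie in the remaining common block $B_i$. Combined with (a), this yields $V(\Gamma)=\bigcup_{|B_i|\ge 2}B_i$ as a disjoint union, and the adjacent-swap construction from (a) makes each $B_i$ with $|B_i|\ge 2$ into one connected component of $\Gamma$. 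Hence $r=\sum_{|B_i|\ge 2}|B_i|$ and $s=|\{i:|B_i|\ge 2\}|$, and these simplify to $\ell(I)=\sum_{i=1}^d |B_i|-d+1$. Using $|B_i|=j_i-\sum_{s<i}t_s\le t_i$ for $i\le d-1$ (by hypothesis) and $|B_d|=n-\sum_{s=1}^{d-1}t_s$, one obtains $\sum_i |B_i|\le n$, so $\ell(I)\le n-d+1<n$ because $d\ge 2$. I expect the most delicate step to be the explicit construction of the swap-partner generators in the forward direction of (a), where the Borel and ${\bf t}$-spread inequalities must be verified simultaneously at each position; after that, the analytic-spread computation is routine bookkeeping.
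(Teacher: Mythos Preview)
Your approach is correct and mirrors the paper's proof: construct explicit swap-pairs to show that each $B_i$ with $|B_i|\ge2$ lies in $V(\Gamma)$, argue contrapositively for the converse, use the disjointness of the $B_i$'s under the hypothesis of (b) to identify them as the connected components of $\Gamma$, and then bound $\ell(I)$ via formula~\eqref{eq:ell} and conclude with Corollary~\ref{Cor:useful}. The only cosmetic differences are that the paper exhibits each $B_i$ as a \emph{clique} in $\Gamma$ (rather than a path via your adjacent swaps), using $u$ and the minimal ${\bf t}$-spread monomial as templates for the degree-$(d-1)$ cofactor, and finishes with a short case split (one component versus several) instead of your closed-form estimate $\ell(I)=\sum_{i=1}^d|B_i|-d+1\le n-d+1$.
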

\begin{proof}
	(a) Suppose that $B_i\subseteq V(\Gamma)$ and assume for a contradiction that $|B_i|=1$, for some $i$. Then $j_i=(\sum_{s=1}^{i-1}t_s)+1$. It follows that each generator of $B_{\bf t}(u)$ is a multiple of $x_{j_i}$ and this implies that $B_i=\{j_i\}\not\subseteq V(\Gamma)$, a contradiction.
	
	Conversely, suppose that $|B_i|\ge2$. We claim  that if $|B_i|\ge2$, then the induced subgraph of $\Gamma$ on $B_i$ is a complete graph. This implies that $B_i\subseteq V(\Gamma)$. 
	
	Let $h_1,h_2\in B_i$ with $h_1<h_2\le j_i$. If $i=1$, then $x_{h_1}(u/x_{j_1}),x_{h_2}(u/x_{j_1})\in G(I)$ since $I$ is ${\bf t}$-spread. Hence $\{h_1,h_2\}\in E(\Gamma)$. If $i=d$, then $vx_{h_1},vx_{h_2}\in G(I)$ where $v=x_1x_{1+t_1}\cdots x_{1+t_1+\dots+t_{d-2}}$. Hence $\{h_1,h_2\}\in E(\Gamma)$. Finally, let $1<i<d$. Then $\sum_{s=1}^{i-1}t_s+1\le h_1<h_2\le j_i$. We have $wx_{h_1},wx_{h_2}\in G(I)$ where $w=x_{j_d}(v/x_{1+t_1+\dots+t_{i-1}})$. Hence $\{h_1,h_2\}\in E(\Gamma)$ as desired.\smallskip
	
	(b) Suppose for a contradiction that $B_p\cap B_q\ne\emptyset$ for some $1\le p<q\le d$. Then $j_p\ge\sum_{s=1}^{q-1}t_s+1>\sum_{s=1}^{p}t_s$. Since by assumption we have $j_p\le\sum_{s=1}^pt_s$, we reach the desired contradiction.
			
	Next, we claim that $\Gamma$ does not contain any edge with one endpoint in $B_p$ and the other endpoint in $B_q$ for some $1\le p<q\le d$. If $|B_p|=1$ or $|B_q|=1$, then (a) implies the assertion. Now, let $|B_p|\ge2$ and $|B_q|\ge2$. Recall that $B_p\cap B_q=\emptyset$ for all $1\le p<q\le d$. Take $h_1\in B_p$ and $h_2\in B_q$ for some $1\le p,q\le d$ with $p\ne q$. Then for any $v\in G(I)$ with $x_{h_1}$ dividing $v$, we have $x_{h_2}(v/x_{h_1})\notin G(I)$.  Indeed, $v$ is not ${\bf t}$-spread because the support of this monomial contains two distinct elements from $B_q$. This yields the assertion.
	
	Now, if $I=(x_1x_{1+t_1}\cdots x_{1+t_1+\dots+t_{d-1}})$ is a principal ideal, there is nothing to prove. Suppose that $I$ is not principal. By Corollary \ref{Cor:useful} it is enough to show that $\ell(I)<n$. By \cite[Corollary 5.3(c)]{F1}, $I$ has linear resolution. Hence, formula \eqref{eq:ell} holds. So it is enough to prove that the linear relation graph $\Gamma$ of $I$ has more than one connected component. Since $I$ is not principal, we have $j_i>(\sum_{s=1}^{i-1}t_s)+1$ for some $i$. Hence $|B_i|\ge2$ for some $i$. By part (a) and the previous claim, it follows that $\Gamma$ is the disjoint union of the complete graphs on the vertex sets $B_i$ for which $|B_i|\ge2$. Hence $\Gamma$ has only one connected component if and only if $|B_d|\geq 2$ and $|B_i|=1$, for all $i=1, \ldots, d-1$. In this case, $\ell(I)=|B_d|<n$. Otherwise, $\Gamma$ has at least two connected components and we have again $\ell(I)<n$, as desired.
\end{proof}

Let $I\subset S$ be a monomial ideal and $P\subset S$ be a monomial prime ideal. The \textit{monomial localization} of $I$ with respect to $P$ is the monomial ideal of the polynomial ring $R=K[x_j:\,x_j\in P]$ defined as $I(P)=\varphi(I)$ where $\varphi:S\rightarrow R$ is the $R$-algebra homomorphism defined by setting $\varphi(x_j)=1$ for all $x_j\notin P$.\medskip

We are now ready to prove Theorem \ref{Thm:Bt(u)SP}.
\begin{proof}
	The equivalence between (a) and (b) follows from \cite[Theorem 1.4.6]{JT}.\smallskip
	
	(b) $\Rightarrow$ (c) Let $I=B_{\bf t}(u)$ and assume that $I^{(k)}=I^k$ for all $k\ge1$. If $d=2$ then the statement follows from \cite[Theorem 5.13]{NQKR}. Hence, we assume that $d\ge3$.
	
	Suppose by contradiction that (c) does not hold. Then, there is at least one index $1\le \ell\le d-1$ such that $j_\ell>\sum_{s=1}^\ell t_s$. Let $\ell$ be the smallest such integer. Let $v=x_{h_1}\cdots x_{h_d}\in G(I)$. Notice that for any $r=1,\dots,\ell-1$ we have
	\begin{equation}\label{eq:hr}
		h_r\in B_r\ \subseteq\ \big[\sum_{s=1}^{r-1}t_s+1,\,\sum_{s=1}^rt_s\big],
	\end{equation}
	and for $r=\ell,\dots,d$ we have $h_r\ge(\sum_{s=1}^{\ell-1}t_s)+1$.
	
	Let $Q=P_{[(\sum_{s=1}^{\ell-1}t_s)+1,n]}$ and $R=K[x_j:x_j\in Q]$. The inclusion \eqref{eq:hr} implies that $I(Q)=B_{\bf s}(v)\cap R$ where ${\bf s}=(t_{\ell},\dots,t_{d-1})$ and $v=u/(x_{j_1}\cdots x_{j_{\ell-1}})$. Hence $I(Q)$ is a ${\bf s}$-spread monomial ideal in the polynomial ring $R$ with $\min(v)=j_\ell\ge 1+\sum_{s=1}^{\ell}t_s$. Moreover, $I(Q)^{(k)}=I(Q)^k$ for all $k\ge1$. Therefore, this reduction shows that we may assume from the very beginning that $\ell=1$ and that $\min(u)=j_1\ge t_1+1$.
	
	To finish the proof, we will show that $I^{(2)}\ne I^2$, reaching the desired contradiction. To this end, we claim that
	\[
	w=(x_1\cdots x_{t_1}x_{t_1+1})(x_{j_2}\cdots x_{j_d})\in I^{(2)}\setminus I^2.
	\] 
	
	Firstly, let us show that $w\in I^{(2)}$. Since $I^{(2)}=\bigcap_{P\in\Ass(I)}P^2$, we must prove that $w\in P^2$ for all $P\in\Ass(I)$.
	
	Let $P\in\Ass(I)$. By Theorem \ref{thm:PrimDecom}, either $P=P_{[n]\setminus\supp_{{\bf t}}(v)}$ for some $v\in G(I)$ or $P=P_{[n]\setminus G}$ for some $G\in\mathcal{G}$, where $\mathcal{G}$ is defined in the statement of Theorem \ref{thm:PrimDecom}.
	
	Suppose $P=P_{[n]\setminus\supp_{{\bf t}}(v)}$. If $\min(v)\ge3$ then $x_1x_2\in P^2$ and so $w\in P^2$. If $\min(v)=2$, then $[2,t_1+1]\subseteq\supp_{\bf t}(v)$. Hence $x_1\in P$ but $x_{1+t_1}\notin P$. Since $x_{1+t_1}x_{j_2}\cdots x_{j_d}\in I\subset P$, $x_{1+t_1}\notin P$ and $P$ is a prime ideal, it follows that $x_{j_s}\in P$ for some $s=2,\dots,d$. Hence $x_1x_{j_s}\in P^2$ and so $w\in P^2$. Finally, if $\min(v)=1$, then $[1,t_1]\subseteq\supp_{\bf t}(v)$. Hence $x_{1+t_1}\in P$ but $x_{1}\notin P$. Since $x_{1}x_{j_2}\cdots x_{j_d}\in I\subset P$ it follows that $x_{j_s}\in P$ for some $s=2,\dots,d$. Hence $x_{1+t_1}x_{j_s}\in P^2$ and so $w\in P^2$.
	
	Suppose now $P=P_{[n]\setminus G}$ with $G\in\mathcal{G}$. Then $G=\supp_{\bf t}(x_{\ell_1}\cdots x_{\ell_{s-1}}x_{j_s})\cup[j_s+1,n]$ where $x_{\ell_1}x_{\ell_2}\cdots x_{\ell_{s-1}}x_{j_s}\in G(B_{\bf t}(x_{j_1}x_{j_2}\cdots x_{j_{s-1}}x_{j_s}))$, for some $s=1,\dots,d$. If $s=1$, then $x_1x_2\in P^2$ and so $w\in P^2$ too. Suppose $s>1$. Then $x_{j_s}\in P$. Moreover, applying the same reasoning as before on the integer $\ell_1$, it follows that $x_1x_2\in P^2$ or $x_1\in P$ or $x_{t_1+1}\in P$. In each possible case, we have $w\in P^2$, as desired.
	
	Hence $w\in I^{(2)}$. Suppose for a contradiction that $w\in I^2$. Then, there would exist squarefree monomials $v_1,v_2\in G(I)$, $g\in S$ such that
	\begin{equation}\label{eq:w}
		w\ = \ (x_1\cdots x_{t_1}x_{t_1+1})(x_{j_2}\cdots x_{j_d})\ =\ v_1v_2g.
	\end{equation}

	Write $v_1=x_{p_1}\cdots x_{p_d}$. Then $p_1\in[1,t_1+1]\cup\{j_2,\dots,j_d\}$. We claim that $p_1\in[1,t_1]$. Suppose, otherwise, that $p_1\in\{j_2,\dots,j_d\}$. Since $p_d>\dots>p_2>p_1$ it would follow from \eqref{eq:w} that $v_1$ divides $x_{j_2}\cdots x_{j_d}$. This is impossible because $v_1$ has degree $d$. Hence $p_1\in[1,t_1+1]$. Since $p_2\ge p_1+t_1$ it follows that $p_2\in\{j_2,\dots,j_d\}$. Therefore $x_{p_2}\cdots x_{p_d}=x_{j_2}\cdots x_{j_d}$. By equation \eqref{eq:w}, it then follows that
	\[
	x_1\cdots x_{p_1-1}x_{p_1+1}\cdots x_{t_1+1}\ =\ v_2g\in I.
	\]
	This is impossible, because $v_2$ is a ${\bf t}$-spread monomial of degree $d\ge3$.\smallskip
	
	(c) $\Rightarrow$ (a) By assumption we have $j_i\le\sum_{s=1}^i t_s$ for all $i=1,\dots,d-1$. We prove by finite induction on $\sum_{i\in\supp(u)}i$ that $I=B_{\bf t}(u)$ is normally torsionfree. 
	
	When $\sum_{i\in\supp(u)}i$ is minimal, then $u=x_{1}x_{1+t_1}\cdots x_{1+t_1+\dots+t_{d-1}}$, $I$ is a principal ideal and so $I$ is normally torsionfree. 
	
	Now suppose that $\sum_{i\in\supp(u)} i$ is not minimal.
	
	Notice that $j_i\ge \sum_{r=1}^{i-1}t_r+1$ for all $r=1,\dots,d$. We may assume that $j_i> \sum_{r=1}^{i-1}t_r+1$ for all $i=1,\dots,d$. Indeed suppose that this is not the case and let $r$ be the biggest integer with this property. Then $B_{\bf t}(u)=(x_{j_1}\cdots x_{j_r})B_{{\bf s}}(v)$ where ${\bf s}=(t_{r+1},\dots,t_{d-1})$ and $v=u/(x_{j_1}\cdots x_{j_r})$, and where $B_{{\bf s}}(v)$ is a ${\bf s}$-spread ideal in the polynomial ring $K[x_{j_r+t_r},\dots,x_n]$ with fewer variables. Then, by Lemma \ref{Lem:FM}, $B_{\bf t}(u)$ is normally torsionfree if and only if $B_{\bf s}(v)$ is such.
	
	Hence we can assume that $j_i> \sum_{r=1}^{i-1}t_r+1$ for all $i=1,\dots,d$. To prove (a) we apply Theorem \ref{Thm:useful}. 
	
	Let $I=B_{\bf t}(u)$ and let $u=x_{j_1}\cdots x_{j_d}$ be the vector-spread Borel generator of $I$. Firstly, let us verify that $u\in P\setminus P^2$ for all $P\in\Ass(I)$. Since $I\subset P$ for all $P\in\Ass(I)$, we only need to verify that $u\notin P^2$ for all $P\in\Ass(I)$. This is equivalent to show that $|G(P)\cap\{x_{j_1},\dots,x_{j_d}\}|\le1$ for all $P\in\Ass(I)$.
	
	Let $P\in\Ass(I)$. By Theorem \ref{thm:PrimDecom}, either $P=P_{[n]\setminus\supp_{\bf t}(v)}$ for some $v\in G(I)$, or else $P=P_{[n]\setminus G}$ for some $G\in\mathcal{G}$. We distinguish the two cases.
	
	Suppose $P=P_{[n]\setminus\supp_{\bf t}(v)}$. We claim that $G(P)\cap\{x_{j_1},\dots,x_{j_d}\}= \{x_{j_d}\}=
	\{x_n\}$ (Setup \ref{setup}) which implies the desired assertion. From the definition of ${\bf t}$-spread support it follows immediately that $x_{j_d}\notin\supp_{\bf t}(v)$ and so $x_{j_d}=x_n\in G(P)$. Now we show that $x_{j_s}\notin G(P)$ for $s=1,\dots,d-1$. Equivalently, we show that $j_s\in\supp_{\bf t}(v)$ for $s=1,\dots,d-1$. Let $v=x_{i_1}\cdots x_{i_d}$. Then $i_s\in B_s=[\sum_{r=1}^{s-1}t_{r}+1,j_s]$, for all $s=1,\dots,d-1$. It follows that
	\[
	i_s+t_s-1\ge \sum_{r=1}^{s-1}t_{r} +1+t_s-1=\sum_{r=1}^st_r\ge j_s\ge i_s.
	\]
	Hence $j_s\in[i_s,i_s+t_s-1]\subseteq\supp_{\bf t}(v)$, as desired.
	
	Suppose $P=P_{[n]\setminus G}$ with $G\in\mathcal{G}$. Then $G=\supp_{\bf t}(x_{\ell_1}\cdots x_{\ell_{s-1}}x_{j_s})\cup[j_s+1,n]$ for some $x_{\ell_1}x_{\ell_2}\cdots x_{\ell_{s-1}}x_{j_s}\in G(B_{\bf t}(x_{j_1}x_{j_2}\cdots x_{j_{s-1}}x_{j_s}))$ and some $s=1,\dots,d$. We claim that $G(P)\cap\{x_{j_1},\dots,x_{j_d}\}=\{x_{j_s}\}$. Notice that $[n]\setminus G=[j_s]\setminus\supp_{\bf t}(x_{\ell_1}\cdots x_{\ell_{s-1}}x_{j_s})$. Then, the same argument as before shows that $G(P)\cap\{x_{j_1},\dots,x_{j_d}\}=\{x_{j_s}\}$.
	
	Now to apply Theorem \ref{Thm:useful} it remains to prove that $(I^{\le{\bf 1}-{\bf e}_{j_i}})^{(k)}=(I^{\le{\bf 1}-{\bf e}_{j_i}})^k$ for all $k\ge1$ and all $i=1,\dots,d$.
	
	Let $1\le i\le d$ and $B_i=[\sum_{s=1}^{i-1}t_s+1,j_i]$, for $i=1,\dots,d$. Our assumption and Proposition \ref{Prop:Gamma} imply that $|B_i|\ge2$ for all $i$ and that the $B_i$'s are pairwise disjoint.  Set $w= x_{j_i-1}(u/x_{j_i})$. If $j_{i}-j_{i-1}>t_i$, then it is clear that  $I^{\le{\bf 1}-{\bf e}_{j_i}}= B_{\bf t}(w)$. 
	Since $j_r\le\sum_{s=1}^rt_s$, for all $r=1,\dots,d$ with $r\ne i$, $j_{i}-1<j_i\le\sum_{s=1}^it_s$ and $\sum_{i\in\supp(w)}i<\sum_{i\in\supp(u)}i$, it follows by induction that $(I^{\le{\bf 1}-{\bf e}_{j_i}})^{(k)}=(I^{\le{\bf 1}-{\bf e}_{j_i}})^k$ for all $k\ge1$, as wanted. Hence, Theorem \ref{Thm:useful} implies that $I^{(k)}=I^k$ for all $k\ge1$.
	
	If otherwise $j_{i}-j_{i-1}=t_i$, then we define the integer
	\[
	\ell\ =\ \max\{m:\ j_p-j_{p-1}=t_{p-1}\ \textup{for all}\ p=m,\dots,i\},
	\]
	and we set
	\[
	v\ =\ \begin{cases}
		\,(\displaystyle\prod_{r=1}^{\ell-2}x_{j_r})(\prod_{r=\ell-1}^{i}x_{j_r-1})(\prod_{r=i+1}^{d}x_{j_r})&\text{if}\ j_i-j_{i-1}=t_{i-1},\\[5pt]
		\,\,x_{j_i-1}(u/x_{j_i})&\text{otherwise}.
	\end{cases}
	\]
	
	We claim that $I^{\le{\bf1}-{\bf e}_{j_i}}=B_{\bf t}(v)$. If $v=x_{j_i-1}(u/x_{j_i})$ this is clear. Now, assume we are in the first case and write $v=x_{h_1}\cdots x_{h_d}$. Notice that $h_r\ge(\sum_{s=1}^{r-1}t_s)+1$ for all $r=1,\dots,d$, because $j_r> \sum_{s=1}^{r-1}t_s+1$ for all $r=1,\dots,d$. Moreover, the definition of $\ell$ implies immediately that $v$ is ${\bf t}$-spread.
	
	Let $w=x_{q_1}\cdots x_{q_d}\in G(B_{\bf t}(v))$. Then $w$ is ${\bf t}$-spread, $\sum_{s=1}^{r-1}t_s +1\le q_r\le h_r\le j_r$ for all $r\in [1,\ell-2]\cup[i+1,d]$, and $\sum_{s=1}^{r-1}t_s+1\le q_r\le h_r\le j_r-1$, for all $r\in [\ell -1, i]$. In particular, $\sum_{s=1}^{i-1}t_s+1\le q_i\le h_i\le j_i-1$. This implies that $x_{j_i}$ does not divide $w$ and so $w\in G(I^{\le{\bf1}-{\bf e}_{j_i}})$.
	
	Conversely, take $w=x_{q_1}\cdots x_{q_d}\in G(I^{\le{\bf1}-{\bf e}_{j_i}})$. Then $w$ is ${\bf t}$-spread with $q_r\in B_r$ for all $r$ and $j_i\notin\supp(w)$. We claim that $q_r\le h_r$ for all $r=1,\dots,d$. For $r\in[1,\ell-2]\cup[i+1,d]$ this is clear. For $r=i$ we have $q_i\in B_i$ and $q_i\ne j_i$. Hence $q_i\le j_i-1=h_i$. Now suppose for a contradiction that $q_{i-1}$ is not less or equal to $j_{i-1}-1=h_{i-1}$. Then, since $q_{i-1}\in B_{i-1}$ we have $q_{i-1}=j_{i-1}$. Since $w$ is ${\bf t}$-spread we have $q_{i}-q_{i-1}-t_{i-1}\ge0$. However, since $q_i\le j_i-1$ and $q_{i-1}=j_{i-1}$ we then have 
	\[
	q_{i}-q_{i-1}-t_{i-1}=j_i-1-j_{i-1}-t_{i-1}=-1
	\]
	by the meaning of $\ell$, and this is absurd. Hence $q_{i-1}\le h_{i-1}$. Proceeding in a similar way we obtain that $q_r\le h_r=j_r-1$ for all $r=\ell-1,\dots,i$. Hence $I^{\le{\bf1}-{\bf e}_{j_i}}\subseteq B_{\bf t}(v)$.
	
	Therefore $I^{\le{\bf 1}-{\bf e}_{j_i}}=B_{\bf t}(v)$. Since $h_r\le j_r\le\sum_{s=1}^rt_s$ for all $r$ and $\sum_{i\in\supp(v)}i<\sum_{i\in\supp(u)}i$, it follows by induction that $(I^{\le{\bf 1}-{\bf e}_{j_i}})^{(k)}=(I^{\le{\bf 1}-{\bf e}_{j_i}})^k$ for all $k\ge1$, as we wanted to show. Finally, Theorem \ref{Thm:useful} implies that $I^{(k)}=I^k$ for all $k\ge1$.
\end{proof}

\textbf{Acknowledgments.} We thank M. Nasernejad for his comments that allowed us to improve the quality of the paper.

All the authors acknowledge support of the GNSAGA of INdAM (Italy). 

A. Ficarra was also supported by 
the Grant JDC2023-051705-I funded by
MICIU/AEI/10.13039/501100011033 and by the FSE+.

\end{document}